\documentclass{amsart}
\usepackage{enumerate}
\usepackage{amssymb}
\usepackage{mathtools}
\usepackage{dsfont}
\usepackage{amsmath}
\usepackage{color}
\usepackage[
colorlinks=true,
linkcolor=blue,
citecolor=blue,
urlcolor=blue]{hyperref}

\numberwithin{equation}{section}

\newcommand{\normmm}[1]{{\left\vert\kern-0.25ex\left\vert\kern-0.25ex\left\vert #1
   \right\vert\kern-0.25ex\right\vert\kern-0.25ex\right\vert}}

\newtheorem{thm}{Theorem}[section]

\newtheorem{theorem}[thm]{Theorem}
\newtheorem{definition}[thm]{Definition}
\newtheorem{lemma}[thm]{Lemma}
\newtheorem{proposition}[thm]{Proposition}
\newtheorem{problem}[thm]{Problem}
\newtheorem{corollary}[thm]{Corollary}

\newcommand{\E}{\mathbb{E}}
\newcommand{\R}{\mathbb{R}}
\newcommand{\C}{\mathbb{C}}
\newcommand{\Z}{\mathbb{Z}}
\newcommand{\T}{\mathbb{T}}
\newcommand{\Tr}{\operatorname{Tr}}
\newcommand{\sgn}{\operatorname{sgn}}

\newcommand{\sa}{\mathrm{sa}}
\newcommand{\mm}{\Phi_{mix}}
\newcommand{\norm}[2]{\left\lVert #1\right\rVert_{#2}}
\newcommand{\HS}{\mathrm{HS}}
\newcommand{\ran}{\operatorname{ran}}

\begin{document}

\title{The Sharp Scale Inequality for the Schatten Trace Class}

\author[Jiao]{Yong Jiao}
\address{School of Mathematics and Statistics, HNP-LAMA, Central South University, Changsha 410075, China}
\email{jiaoyong@csu.edu.cn}

\author[Luo]{Sijie Luo${}^{*}$}\thanks{$*$ The corresponding author.}
\address{School of Mathematics and Statistics, HNP-LAMA, Central South University, Changsha 410075, China}
\email{sijieluo@csu.edu.cn}

\author[Wu]{Lian Wu}
\address{School of Mathematics and Statistics, HNP-LAMA, Central South University, Changsha 410075, China}
\email{wulian@csu.edu.cn}
\date{}

\begin{abstract}
In this paper, we establish the metric cotype $2$ inequality for the Schatten trace class $S_{1}$ at the optimal order of the scaling parameter. More precisely, for every integer $n\geq 1$, every even integer $m\geq 4$, and every mapping
$f:\mathbb{Z}_{m}^{n}\to S_{1}$, the following holds
\begin{align*}
&\left(\sum_{j=1}^{n}\mathbb{E}_{x}\left\|f\left(x+\frac{m}{2}e_{j}\right)-f(x)\right\|_{1}^{2}\right)^{1/2}\\
\leq&\left(3\sqrt{70}\,m+6\sqrt{n}\right)\left(\mathbb{E}_{x,\varepsilon}\left\|f(x+\varepsilon)-f(x)\right\|_{1}^{2}\right)^{1/2},
\end{align*}
where $x$ and $\varepsilon$ are independently uniform on $\mathbb{Z}_{m}^{n}$ and $\{-1,0,1\}^{n}$, respectively. The order $\sqrt{n}$ is necessary for any fixed metric cotype constant. This settles the optimal scaling problem for the Schatten trace class posed by Mendel and Naor \cite[Section 8]{MN} (see also \cite[Question 3]{GMN}).
\end{abstract}

\subjclass[2020]{Primary 46B07; Secondary 47B10, 46L52}
\keywords{Metric cotype, Schatten class, Ribe program, Banach space}

\maketitle

\section{Introduction}

Rademacher type and cotype are basic invariants of the linear geometry of Banach spaces. Maurey's factorization theory \cite{Maurey1974} and the work of Maurey and Pisier \cite{M-P1976} relate these invariants to linear operators and finite-dimensional subspaces. We refer to Maurey's survey \cite{Maurey2003} for more detail on this topic. Let $X$ be a Banach space, $1\leq p\leq2$, and $2\leq q<\infty$. We recall here that the space $X$ has \emph{(Rademacher) type} $p$ if there is a constant $T<\infty$ such that
\begin{equation}
\label{eq:sc-intro-rademacher-type}
\left(\mathbb{E}_{\delta}\left\|\sum_{j=1}^{n}\delta_jx_j\right\|_X^p\right)^{1/p}
\leq T\left(\sum_{j=1}^{n}\|x_j\|_X^p\right)^{1/p}
\end{equation}
for every finite sequence $x_1,\ldots,x_n\in X$, where $\delta$ is uniform on $\{-1,1\}^n$. It has \emph{(Rademacher) cotype} $q$ if there is a constant $C<\infty$ such that every such sequence satisfies
\begin{equation}
\label{eq:sc-intro-rademacher-cotype}
\left(\sum_{j=1}^{n}\|x_j\|_X^q\right)^{1/q}
\leq C\left(\mathbb{E}_{\delta}\left\|\sum_{j=1}^{n}\delta_jx_j\right\|_X^q\right)^{1/q}.
\end{equation}
The least constants are denoted by $T_p(X)$ and $C_q(X)$, respectively. It is easy to see that every Banach space has type $1$ and cotype $\infty$.

These type and cotype conditions have numerous geometric consequences of Banach spaces. The fundamental result of Kwapie\'n \cite{Kwapien1972} states that $X$ is isomorphic to a Hilbert space if and only if it has type $2$ and cotype $2$. This striking theorem states that the type and cotype conditions essentially characterize Hilbertian spaces. On the other hand, a fundamental theorem of Maurey and Pisier \cite{M-P1976} characterizes type and cotype conditions on a Banach space $X$ in terms of specific finite dimensional subspaces of $X$. More precisely, for $1\leq r\leq\infty$, we say that a Banach space contains $\ell_r^n$ \emph{uniformly} if there is a constant $K<\infty$ such that, for every $n$, there is a linear embedding $u_n:\ell_r^n\to X$ with $\|u_n\|\|u_n^{-1}\|\leq K$. For an infinite-dimensional Banach space $X$, set
\[
p_X=\sup\{p\in[1,2]:T_p(X)<\infty\},\qquad
q_X=\inf\{q\in[2,\infty):C_q(X)<\infty\},
\]
with $\inf\varnothing=\infty$. The Maurey--Pisier theorem asserts that $X$ contains $\ell_{p_X}^n$ and $\ell_{q_X}^n$ uniformly with distortion at most $1+\eta$; see \cite{M-P1976} and \cite[Theorem 6]{Maurey2003}.

A breakthrough was made by Ribe \cite{Ribe} who showed that uniformly homeomorphic Banach spaces are crudely finitely representable in each other. In other words, the metric structure of a Banach space determines its finite dimensional linear structure. Recently, motivated by the developments of nonlinear geometry, a significant program promoted by Naor \cite{NaorSurvey}, called the Ribe program,  asks for characterizations of such linear properties using only distances; see \cite{Bou1986}. Early work on nonlinear type includes Enflo \cite{Enflo1978} and Bourgain, Milman, and Wolfson \cite{BMW1986}. A metric space $(\mathcal M,d_{\mathcal M})$ has \emph{Enflo type} $p$ if there is $T<\infty$ such that every $f:\{-1,1\}^n\to\mathcal M$ satisfies
\begin{equation}
\label{eq:sc-intro-enflo-type}
\mathbb E_{\delta}d_{\mathcal M}(f(\delta),f(-\delta))^p
\leq T^p\sum_{j=1}^{n}\mathbb E_{\delta}
 d_{\mathcal M}(f(\delta),f(\delta^{(j)}))^p,
\end{equation}
where $\delta^{(j)}\coloneqq (\delta_{1},\cdots, \delta_{j-1},-\delta_{j},\delta_{j+1},\cdots,\delta_{n})$ is the $j$-th flip of $\delta$. On a Banach space, applying this inequality to $f(\delta)=\sum_j\delta_jx_j$ gives Rademacher type $p$. Ivanisvili, van Handel, and Volberg \cite{IVV2020} proved the coincidence between Enflo type and Rademacher type for Banach spaces. Since then several new metric type are introduced to investigate the Lipschitz geometry of Banach spaces, we refer to \cite{IS2022} for  the KKL type and to \cite{C-E2023} for the Talagrand type for metric spaces.

On the other hand, the search for a metric characterization of cotype required a different inequality, because the reversing inequality of \eqref{eq:sc-intro-enflo-type} fails even for Hilbert spaces. Mendel and Naor \cite{MN} resolved this problem by using discrete tori and a scaling parameter. For an even integer $m$, write $\mathbb Z_m=\mathbb Z/m\mathbb Z$, and let $e_1,\ldots,e_n$ denote the coordinate vectors. 
\begin{definition}[Mendel--Naor]
A metric space $(\mathcal M,d_{\mathcal M})$ has \emph{metric cotype} $q$ with constant $\Gamma$ if, for every $n\geq1$, there is an even integer $m$ such that every $f:\mathbb Z_m^n\to\mathcal M$ satisfies
\begin{equation}\label{eq:sc-intro-metric-cotype}
\sum_{j=1}^{n}\mathbb E_x d_{\mathcal M}\left(f\left(x+\frac m2e_j\right),f(x)\right)^q\leq\Gamma^qm^q\mathbb E_{x,\varepsilon}d_{\mathcal M}(f(x+\varepsilon),f(x))^q,
\end{equation}
$x$ and $\varepsilon$ are independent and uniform on $\mathbb Z_m^n$ and $\{-1,0,1\}^n$, respectively, and addition on the torus is modulo $m$.
\end{definition}
The infimum of the admissible constants is denoted by $\Gamma_q(\mathcal M)$. The metric cotype theorem \cite[Theorem~1.2]{MN} states that, for every Banach space $X$ and $q\in[2,\infty)$,
\[
\frac{C_q(X)}{2\pi}\leq\Gamma_q(X)\leq90C_q(X).
\]
In particular, Mendel and Naor \cite{MN} proved that the metric cotype $q$ is equivalent to Rademacher cotype $q$ in Banach spaces. Let $m_q(\mathcal M;n,\Gamma)$ be the smallest even integer $m$ for which \eqref{eq:sc-intro-metric-cotype} holds for every $f:\mathbb{Z}^{n}_{m}\to \mathcal{M}$, with value $+\infty$ if no such integer exists. For every metric space containing at least two points, Mendel and Naor's two-point test \cite[Lemma~2.3]{MN} gives
\begin{equation}
\label{eq:sc-intro-scale-lower}
m_q(\mathcal M;n,\Gamma)\geq\frac{n^{1/q}}{\Gamma}.
\end{equation}
Thus $n^{1/q}$ is the smallest possible order. Motivated by the sharp order appears in the power of $n$, Eskenazis, Mendel, and Naor \cite{E-M-N2019} introduced \emph{sharp metric cotype} $q$ if there are $\Gamma,K<\infty$ such that $m_q(\mathcal M;n,\Gamma)\leq Kn^{1/q}$ for every $n$. Such bounds yield obstructions to uniform and coarse embeddings and lower bounds for the distortion of finite grids \cite[Section 7]{MN}. Mendel and Naor also separated the moment exponent from the cotype exponent in the following manner. For $1\leq p\leq q$, let $m_q^{(p)}(\mathcal M;n,\Gamma)$ be the smallest even $m$ such that every $f:\mathbb Z_m^n\to\mathcal M$ satisfies
\begin{equation}\label{eq:sc-intro-moment-variant}
\sum_{j=1}^{n}\mathbb E_x
 d_{\mathcal M}\left(f\left(x+\frac m2e_j\right),f(x)\right)^p\leq\Gamma^pm^pn^{1-p/q}\mathbb E_{x,\varepsilon}d_{\mathcal M}(f(x+\varepsilon),f(x))^p.
\end{equation}
The value is $+\infty$ if no such $m$ exists, and $m_q^{(q)}=m_q$. The following open question raised in \cite[Section~8, item~1]{MN} is one of the fundamental problems in metric cotype theory of Banach spaces.
\begin{problem}[Mendel--Naor]
For arbitrary Banach space $X$ of cotype $q$ and every $1\leq p\leq q$, whether there are constants $\Gamma,K<\infty$, independent of $n$, such that $m_q^{(p)}(X;n,\Gamma)\leq Kn^{1/q}$ for every $n$.
\end{problem}

It is also worthwhile to mention here that the role of sharp scaling also appears in the metric $X_p$ inequalities of Naor and Schechtman \cite{NS2016}. For $2<q<p<\infty$, their inequalities exclude bi-Lipschitz embeddings of $L_q$ into $L_p$; Naor's sharp form \cite{Naor2016} shows that the largest $\theta\in(0,1]$ for which $(L_q,\|x-y\|_q^\theta)$ admits a bi-Lipschitz embedding into $L_p$ is $q/p$.

Mendel and Naor obtained sharp metric cotype for $K$-convex Banach spaces with cotype $q$ \cite[Theorem~4.1]{MN}. However, without this hypothesis, their proof gave the order $n^{2+1/q}$. Giladi, Mendel, and Naor \cite[Theorem~1.2]{GMN} improved the general bound to
\begin{equation}
\label{eq:sc-intro-gmn-scale}
m_q(X;n,\Gamma)\leq Kn^{1+1/q},
\end{equation}
where $\Gamma$ and $K$ depend only on $q$ and $C_q(X)$.

The Schatten trace class is a principal example for which the argument for spaces of nontrivial type does not apply. For $1\leq r<\infty$, the Schatten class $S_r$ consists of compact operators $T$ on $\ell_2$ for which
\[
\|T\|_r=\left(\operatorname{Tr}|T|^r\right)^{1/r}<\infty,
\]
where $|T|=(T^*T)^{1/2}$ and $\operatorname{Tr}$ is the trace on $B(\ell_{2})$. In particular, $S_1$ is the trace class and $S_2$ is the Hilbert--Schmidt class \cite{Simon}. Tomczak-Jaegermann \cite{TJ} proved that $S_r$ has type $\min\{r,2\}$ and cotype $\max\{r,2\}$. In particular, $S_1$ has metric cotype $2$ by the equivalence of cotype and metric cotype for Banach spaces. However, the diagonal operators form an isometric copy of $\ell_1$, hence $S_1$ has trivial type.

Mendel and Naor \cite[Section 8, item 3]{MN} singled out $S_1$ in their questions concerning the removal of $K$-convexity from the embedding consequences of metric cotype. Subsequently, Giladi, Mendel, and Naor \cite[Section 1.3]{GMN} emphasized $L_1$ and $S_1$ as basic cases of the sharp-scaling problem. For the $S_1$ case, \eqref{eq:sc-intro-gmn-scale} gives $m=O(n^{3/2})$, while \eqref{eq:sc-intro-scale-lower} only requires $m\gtrsim\sqrt n$. Closing this gap is necessary to apply the embedding arguments in \cite[Section~7]{MN} to this target. One concrete motivation is the grid-distortion question in \cite[Question~3]{GMN}. Write $[k]_\infty^n=\{0,\ldots,k\}^n$ with the $\ell_\infty$ distance, and let $c_Y(A)$ be the infimum of $\|h\|_{\mathrm{Lip}}\|h^{-1}\|_{\mathrm{Lip}}$ over injective maps $h:A\to Y$. For $Y=S_1$, the question asks whether
\[
c_{S_1}([\lfloor\sqrt n\rfloor]_\infty^n)\geq c\sqrt n
\]
for some $c>0$ independent of $n$. The general cotype estimate in \cite{GMN} only gives a lower bound of order $n^{1/6}$ for these grids.

 Recently, Cheng, Wang, and Xiang \cite{CWX2026} proved sharp metric cotype $2$ for $L_1$, resolving the open question for the commutative case. Their proof uses the representation of a finite $L_1$ metric as a nonnegative sum of cut metrics \cite{D-L1997} and a nonlinear smoothing argument. However, this representation is not available for arbitrary trace-norm distances between matrices, thus handling the case $S_1$ requires more efforts.

In this paper, we establish sharp metric cotype $2$ for $S_1$. More precisely, we prove the following estimate for every even side length $m\geq4$.

\begin{theorem}\label{thm:main}
For every $n\geq1$, every even $m\geq4$, and every
$f:\Z_m^n\to S_1$,
\begin{equation}\label{main sharp}
\begin{split}
&\sum_{j=1}^n\E_{x\in\Z_m^n}\norm{f(x+\tfrac m2e_j)-f(x)}1^2\\
\leq&\bigl(3\sqrt{70}\,m+6\sqrt n\bigr)^{2}\E_{(x,\varepsilon)\in\Z_m^n\times\{-1,0,1\}^{n}}\norm{f(x+\varepsilon)-f(x)}1^2.
\end{split}
\end{equation}
\end{theorem}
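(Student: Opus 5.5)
The plan is to follow the Mendel--Naor smoothing scheme, but to replace the $K$-convexity step (which fails in $S_1$) by the noncommutative Khintchine inequality of Lust-Piquard--Pisier in $S_1$. That inequality describes Rademacher averages in $S_1$ through row and column square functions, and in this form $S_1$ behaves like a Hilbert-space-valued $L_1$.

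\textbf{Step 1 (smoothing).} Fix a scale $k\asymp m$ and replace $f$ by the averaged map
\[
\tilde f(x)=\E_{y\in\{-k,\dots,k\}^n}f(x+y).
\]
Two bounds are needed.
\begin{enumerate}[(a)]
\item A \emph{small-scale} bound. By the triangle inequality along lattice paths combined with a counting argument in which $y$ is shifted coordinatewise by $\varepsilon$, the quantity $\sum_j\E_x\norm{f(x+\frac m2e_j)-\tilde f(x+\frac m2 e_j)-f(x)+\tilde f(x)}1^2$ should be bounded by $O(n)\cdot\E_{x,\varepsilon}\norm{f(x+\varepsilon)-f(x)}1^2$. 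This comparison uses only the triangle inequality and the convexity of $\|\cdot\|_1^2$, so nothing about $S_1$ enters. It is the step meant to produce the additive $6\sqrt n$ term after taking square roots.
\item A \emph{Lipschitz} bound. The discrete partial differences of $\tilde f$ satisfy $\|\partial_j\tilde f\|\lesssim k^{-1}\times$(edge differences of $f$). This is where $\tilde f$ gains over $f$.
\end{enumerate}

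\textbf{Step 2 (the smoothed map).} Expand $\tilde f$ in Fourier series on $\Z_m^n$. The antipodal difference $\tilde f(x+\frac m2e_j)-\tilde f(x)$ only sees characters with odd $j$-th frequency. Writing it as a telescoping sum of $m/2$ unit steps along $e_j$ gives
\[
\norm{\tilde f(x+\tfrac m2e_j)-\tilde f(x)}1\le \tfrac m2\,\E_{t}\norm{\partial_j\tilde f(x+te_j)}1 .
\]
It is then enough to prove
\[
\sum_j\E_x\|\partial_j\tilde f(x)\|_1^2\lesssim \E_{x,\varepsilon}\|f(x+\varepsilon)-f(x)\|_1^2 ,
\]
which has the form of a cotype-$2$ inequality for the vectors $\partial_j\tilde f(x)$. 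For this I would apply Lust-Piquard--Pisier pointwise in $x$: split $\partial_j\tilde f=a_j+b_j$ so that
\[
\E_\delta\Big\|\sum_j\delta_j\partial_j\tilde f\Big\|_1\asymp\Big\|\big(\sum a_j^*a_j\big)^{1/2}\Big\|_1+\Big\|\big(\sum b_jb_j^*\big)^{1/2}\Big\|_1 .
\]
Then bound $\big(\sum_j\|a_j\|_1^2\big)^{1/2}$ by $\|(\sum a_j^*a_j)^{1/2}\|_1$, which is the triangle inequality in $S_1(\ell_2)$ and carries constant $1$. The Rademacher average must finally be compared with the $\{-1,0,1\}$-increments of $f$. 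I would do this by transferring signs through the averaging kernel: $\sum_j\delta_j\partial_j\tilde f(x)$ is an average over $y$ of increments $f(x+y+\varepsilon)-f(x+y)$, up to boundary terms of relative size $O(1/k)$. The explicit numerical constants, such as $\sqrt{70}$ coming from second-moment bookkeeping of the kernel, would be tracked only at the end.

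\textbf{Main obstacle.} The hardest step is the transfer in Step 2 from $\E_\delta\|\sum\delta_j\partial_j\tilde f\|_1$ to the edge energy of $f$ without losing a power of $n$. This is exactly the place where the proof of the general bound \eqref{eq:sc-intro-gmn-scale} loses $n$, because it lacks a bounded Rademacher projection. My first attempt would avoid projections altogether: work directly at the level of the square functions. The row and column square-function norms are convex, and they are stable under averaging by positive kernels (operator convexity of $t\mapsto t^2$ together with Jensen for the trace norm). So I would bound the square functions of $\partial_j\tilde f$ by kernel averages of the square functions of the increments of $f$ itself, and only then return to $\|f(x+\varepsilon)-f(x)\|_1$ through the lower Lust-Piquard--Pisier estimate.
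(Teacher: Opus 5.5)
Your scheme has a genuine gap. With a \emph{linear} smoothing $\tilde f$, the two estimates you need pull in opposite directions, and neither is established at a common scale.

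First, Step 1(a) is false at scale $k\asymp m$. Take $n=1$ and $f(x)=\phi(x)P$, with $\phi$ the cyclic distance to $0$ and $P$ a rank-one projection. The edge energy is $2/3$. The term in (a) is $4\E_x|\tilde\phi(x)-\phi(x)|^2$, and $\tilde\phi(x)-\phi(x)=(k-x)(k+1-x)/(2k+1)\asymp k$ for $0\leq x\leq k/2$. So that term is $\gtrsim k^3/m\asymp m^2$.

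Nor can an argument using only the triangle inequality do better than $nk^2$. For the Fr\'echet embedding $x\mapsto(d_\infty(x,z))_{z}$ of $(\Z_m^n,d_\infty)$ into $\ell_\infty$, the edge energy is at most $1$. Yet reading off the coordinate $z=x$ shows that each of the $n$ terms in (a) is at least $(\E|y_j|)^2\geq k^2/4$ once $k\leq m/4$. In the critical regime $m\asymp\sqrt n$ this forces $k=O(1)$, which is what the paper uses ($\psi$ is supported in $[-1,1]$).

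At bounded scale, however, the gradient bound you reduce to in Step 2 is false for linear smoothing in $S_1$. Take the diagonal matrix units $f(x)=E_{xx}$ on $\ell_2(\Z_m^n)$. Then $\norm{f(x+\varepsilon)-f(x)}{1}\leq2$, while $\norm{\partial_j\tilde f(x)}{1}=2/(2k+1)$ for every $j$. (For any positive kernel supported in $\{-C,\dots,C\}^n$ it is at least $2/(2C+1)$, by passing to one-dimensional marginals.) Hence $\sum_j\norm{\partial_j\tilde f(x)}{1}^2\gtrsim n$.

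Independently of this, Step 2 is not proved at any scale. The ``sign transfer'' is not a boundary effect. The sum $\sum_j\delta_j\partial_j\tilde f(x)=\E_y\sum_j\delta_j\bigl(f(x+y+e_j)-f(x+y)\bigr)$ is made of single-coordinate increments. Comparing it with diagonal increments $f(z+\delta)-f(z)$ means extracting the degree-one Walsh part of $\delta\mapsto\tilde f(x+\delta)$, i.e.\ bounding a Rademacher projection, which is exactly what fails in $S_1$.

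Your square-function fallback does not escape this. The triangle inequality for column square functions discards the cancellation created by the averaging. The lower Lust-Piquard--Pisier estimate then only returns $\E_\delta\norm{\sum_j\delta_j(f(z+e_j)-f(z))}{1}$. For the matrix units this equals $n+\E_\delta|\sum_j\delta_j|\geq n$, against diagonal increments of norm at most $2$.

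The missing idea is a \emph{nonlinear} smoothing at scale $O(1)$. The paper takes $g(z)=\E\,\mm(F(z+W^{(1)}),F(z+W^{(2)}),F(z+W^{(3)}))$ with $F(z)=f(\lfloor z\rfloor)$. Here $\mm$ is a symmetric noncommutative median of three self-adjoint matrices; on diagonal matrices it is the entrywise median, so the matrix-unit example is smoothed essentially to $0$. Its key property is $\mm(A,B,C)-B=V|C-B|^{1/2}+|C-B|^{1/2}V^*$ with $\norm{V}{2}\leq\frac12\norm{A-B}{1}^{1/2}$. Since $\partial_jg=3\E[\xi_j(W^{(1)})\mm(\cdots)]$ with $\E[\xi_i\xi_j]=70\,\mathbf 1_{\{i=j\}}$, Bessel's inequality in the Hilbert--Schmidt class followed by H\"older gives $(\sum_j\norm{\partial_jg(z)}{1}^2)^{1/2}\leq6\sqrt{70}\,\E_W\norm{F(z+W)-F(z)}{1}$. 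No Khintchine inequality, Rademacher projection or $K$-convexity is involved.
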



Choosing the smallest even $m\geq\max\{4,\sqrt n\}$ and using \eqref{eq:sc-intro-scale-lower}, we obtain
\[
\frac{\sqrt n}{32}\leq m_2(S_1;n,32)\leq4\sqrt n.
\]
This resolves the $X=S_1$, $p=q=2$ case of \cite[Section~8, item~1]{MN}, and the order $\sqrt n$ is optimal. By \cite[Corollary~7.2]{MN}, the sharp scaling also implies that $\ell_r$ admits neither a uniform nor a coarse embedding into $S_1$ for any $r>2$.

We briefly describe key ingredients in our proof. The first new tool is the mixing operators mapping $\Phi_{\mathrm{mix}}$ defined in \eqref{eq:median}. This mapping is invariant under permutations, on three self-adjoint matrices. Furthermore, for fixed matrices $B,C$, we prove the following factorization property of $\Phi_{mix}$ holds
\[
\Phi_{\mathrm{mix}}(A,B,C)-B=V_{B,C}(A)|C-B|^{1/2}+|C-B|^{1/2}V_{B,C}(A)^{*},
\]
and
\[
\|V_{B,C}(A)\|_2\leq\tfrac12\|A-B\|_1^{1/2}.
\]

By the mapping $\Phi_{mix}$ we construct a nonlinear differentiable approximation as follows. Let $F$ be a measurable function defined on $\mathbb{R}^{n}$ with finite range in the space of $d\times d$ self-adjoint matrices, and let $W^{(1)},W^{(2)},W^{(3)}$ be independent vectors whose coordinates are independent with density $\psi(t)=84|t|^5(1-|t|)^2\mathbf1_{[-1,1]}(t)$, taken from \cite[Section 2.2]{CWX2026}. We define the following mapping
\[
g(z)=\mathbb E\Phi_{\mathrm{mix}}\bigl(F(z+W^{(1)}),F(z+W^{(2)}),F(z+W^{(3)})\bigr).
\]
It will be derived in Section \ref{approximate section} that $g\in C^{1}(\mathbb{R}^{n};M_{d})$ such that $g$ approximates $F$ in the following manner
\[
\begin{aligned}
\|g(z)-F(z)\|_1&\leq3\mathbb E_W\|F(z+W)-F(z)\|_1,\\
\left(\sum_{j=1}^n\|\partial_jg(z)\|_1^2\right)^{1/2}
&\leq6\sqrt{70}\,\mathbb E_W\|F(z+W)-F(z)\|_{1},
\end{aligned}
\]
where $W$ has the same law as each $W^{(i)}$. Applying these bounds to the periodic extension $F(z)=f(\lfloor z\rfloor)$ for each $z\in \mathbb{R}^{n}$, with integer coordinates reduced modulo $m$, yields \eqref{main sharp}. Finally, the Hermitian dilation argument and finite-rank approximation of compact operator yield the full result for the Schatten trace class $S_1$.

The paper is organized as follows. Section \ref{preliminaries and matrix analysis} collects necessary background of matrix analysis that will be used throughout this paper. Section \ref{three matrices} is devoted to introducing and analyzing the mix three matrices mapping $\Phi_{mix}$. Basic factorization of $\Phi_{mix}$ will be stated and proved in detail. Section \ref{expectation section} and Section \ref{approximate section} establishes the expectation estimates of the mix three matrices mapping $\Phi_{mix}$ and combine the specific probability density $\psi$ with $\Phi_{mix}$ to produce a smoothing approximation, respectively. We conclude this paper, in Section \ref{full proof}, to give a proof of the main result Theorem \ref{thm:main} and the optimal order of the scaling parameter.

All notations in this paper are standard. For any $x\in \mathbb{R}$, we let $\lfloor x\rfloor$ denotes the floor of $x$, and $\lfloor z\rfloor\coloneqq (\lfloor z_{1}\rfloor,\cdots, \lfloor z_{n}\rfloor)$ for $z\in \mathbb{R}^{n}$. For $d\geq1$, let $M_{d}=M_{d}(\C)$ be $*$-algebra of all $d\times d$ complex matrices and
$M_{d}^{\sa}=\{A\in M_{d}:A=A^{*}\}$ be the self-adjoint part of $M_{d}$. For a given random variable $W$, we use $\mathbb{E}_{W}$ to stand for the expectation with respect to $W$.

\section{Preliminaries}\label{preliminaries and matrix analysis}

This section is devoted to providing estimates invoking Schatten $p$-norm of operators. We first specify the matrix conventions and prove the norm estimates used below. These are standard finite-dimensional facts from spectral theory and the theory of unitarily invariant norms; see
\cite[Chapters I and IV]{Bhatia} and \cite{Penrose, Simon}.

For two matrices $S$ and $T$, we recall the operator order $S\leq T$ as follows
\[
\langle(T-S)v,v\rangle\geq0\quad\text{for every }v\in\C^{d}.
\]
A matrix $T$ is called positive if it is self-adjoint and $T\geq 0$ with respect to the operator order. If
$S=\sum_{\lambda\in\sigma(S)}\lambda E_{\lambda}$ is the spectral
decomposition of a self-adjoint matrix, set
\[
 |S|=\sum_{\lambda\in\sigma(S)}|\lambda|E_{\lambda},\qquad
 \sgn(S)=\sum_{\substack{\lambda\in\sigma(S)\\\lambda\neq0}}
                   \frac{\lambda}{|\lambda|}E_{\lambda}.
\]
Thus $S=\sgn(S)|S|=|S|\sgn(S)$ is the polar decomposition of $S$. For a positive matrix $D$, define
\begin{equation}
 P_{D}=\sum_{\lambda>0}E_{\lambda},\qquad
 D^{\dagger}=\sum_{\lambda>0}\lambda^{-1}E_{\lambda}.
 \label{eq:spectralinverse}
\end{equation}
Here the sums range over the positive eigenvalues of $D$. The matrix $P_{D}$ is the orthogonal projection onto
$\ran D$, and
\[
 D^{\dagger}D=DD^{\dagger}=P_{D}.
\]

The Hilbert--Schmidt inner product and the operator norm are
\[
 \langle A,B\rangle_{\HS}=\Tr(B^{*}A),\qquad
 \norm{A}{2}=\left(\Tr(A^{*}A)\right)^{1/2},\qquad
 \norm{A}{\infty}=\sup_{\norm{v}{\C^{d}}=1}\norm{Av}{\C^{d}}.
\]
In particular, $\norm{\sgn(S)}{\infty}\leq1$. We shall use the following noncommutative H\"older inequality. For each $1\leq p,~q\leq \infty$ with $\frac{1}{p}+\frac{1}{q}=\frac{1}{r}$, the following inequality holds for all $A$, $B\in M_{d}$,
\begin{equation} \label{eq:holder}
 \norm{AB}{r}\leq\norm{A}{p}\norm{B}{q}.
\end{equation}

We also need the following support implication. If $0\leq S\leq D$
and $v\in\ker D$, then
\[
 0\leq\norm{S^{1/2}v}{\C^{d}}^{2}
 =\langle Sv,v\rangle\leq\langle Dv,v\rangle=0.
\]
It follows that $\ker D\subseteq\ker S$. If instead $S=S^{*}$ and
$|S|\leq D$, apply this implication to $|S|$ and use
$\ker|S|=\ker S$. Thus $S(I_{d}-P_{D})=0$. Taking adjoints yields
$(I_{d}-P_{D})S=0$, so
\begin{equation}
 SP_{D}=P_{D}S=S\qquad\text{whenever }S=S^{*}\text{ and }|S|\leq D.
 \label{eq:support}
\end{equation}

We conclude this section by recalling the Banach space-valued Lebesgue space. Let $B$ be a Banach space and $(\Omega,\mathcal{F}, \mu)$ be a $\sigma$-finite measure space. The $L_{p}$ Bochner--Lebesgue space $L_{p}(\Omega;B)$ is the space consisting of all Bochner measurable function $f:\Omega\to B$ with
\[
\|F\|_{L_{p}(B)}\coloneqq \left(\int_{\Omega} \|F(\theta)\|^{p}_{B}~d\mu(\theta)\right)^{1/p}<\infty,
\]
for $1\leq p<\infty$ and
\[
\|F\|_{L_{\infty}(B)}\coloneqq \operatorname{ess \, sup}\|F(\theta)\|_{B}<\infty.
\]
Then $L_{p}(\Omega; B)$ is a Banach space for all $1\leq p \leq \infty$.
\section{The three matrices operation}\label{three matrices}
This section is devoted to introducing the mixing operator mapping $\Phi_{mix}$ and exploring its basic properties. We begin with the following basic estimate on operator norm of the composition of tree matrices.

\begin{lemma}\label{lem:cross}
If $S,T,D\in M_{d}$ are positive and $S+T\leq D$, then
\begin{equation}
 \norm{S^{1/2}D^{\dagger}T^{1/2}}{\infty}\leq\frac12.
 \label{eq:cross}
\end{equation}
\end{lemma}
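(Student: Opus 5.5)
We need $\|S^{1/2}D^\dagger T^{1/2}\|_\infty\le 1/2$ whenever $S,T\ge0$ and $S+T\le D$. The plan is to reduce everything to the range of $D$, where $D$ is invertible, and then use a contraction argument together with the elementary bound $\sqrt{a}\sqrt{1-a}\le 1/2$.

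\emph{Reduction to the range of $D$.} Since $0\le S\le S+T\le D$ and likewise $0\le T\le D$, the support implication \eqref{eq:support} gives $SP_D=P_DS=S$ and $TP_D=P_DT=T$. Hence $S^{1/2}$ and $T^{1/2}$ also commute with $P_D$ and vanish on $\ker D$. All of $S^{1/2},T^{1/2},D^\dagger$ therefore act on $\mathcal H=\ran D$ and are zero on its complement. So we may assume $D$ is invertible on $\mathcal H$, with $D^\dagger=D^{-1}$ there.

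\emph{Contraction step.} Set $X=S^{1/2}D^{-1/2}$ and $Y=T^{1/2}D^{-1/2}$ on $\mathcal H$. Then $S^{1/2}D^{-1}T^{1/2}=XY^*$. From $S+T\le D$, conjugating by $D^{-1/2}$ gives
\[
X^*X+Y^*Y=D^{-1/2}(S+T)D^{-1/2}\le I_{\mathcal H}.
\]
Now let $Z$ be the column operator $\binom{X}{Y}:\mathcal H\to\mathcal H\oplus\mathcal H$. The inequality above says exactly that $Z$ is a contraction, so $ZZ^*=\begin{pmatrix}XX^*&XY^*\\YX^*&YY^*\end{pmatrix}$ is a positive contraction.

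\emph{Key inequality (the main step).} Let $a=\|X\|_\infty^2$. Since $Y^*Y\le I-X^*X$, we get $\|Yu\|^2\le\|u\|^2-\|Xu\|^2$ for every $u$. Therefore
\[
|\langle XY^*v,w\rangle|=|\langle Y^*v,X^*w\rangle|
\]
for unit vectors $v,w$. The direct approach is to estimate this by $\|XY^*\|\le\|X\|\,\|Y\|$. That only gives $\le\sqrt a\sqrt{1-\min\text{spec}(X^*X)}$, which is not enough. Instead I would write $XY^*=X(X^*X+Y^*Y)^{\dots}$ using the following device.

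Let $P=X^*X$ and $Q=Y^*Y$ with $P+Q\le I$. By polar decomposition, $X=U_1P^{1/2}$ and $Y=U_2Q^{1/2}$ with $U_1,U_2$ partial isometries. Then $\|XY^*\|=\|P^{1/2}Q^{1/2}\|$, so it suffices to show
\[
\|P^{1/2}Q^{1/2}\|\le\tfrac12.
\]
Since $Q\le I-P$ and $t\mapsto t^{1/2}$ is operator monotone, for every unit vector $u$ we have $\|Q^{1/2}u\|\le\|(I-P)^{1/2}u\|$. In other words $Q^{1/2}=K(I-P)^{1/2}$ for some contraction $K$ (Douglas' lemma). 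Hence
\[
P^{1/2}Q^{1/2}=P^{1/2}(I-P)^{1/2}K^*,
\]
and therefore
\[
\|P^{1/2}Q^{1/2}\|\le\|P^{1/2}(I-P)^{1/2}\|_\infty=\sup_{\lambda\in\sigma(P)\subseteq[0,1]}\sqrt{\lambda(1-\lambda)}\le\tfrac12.
\]
Here we use $\|(P^{1/2}Q^{1/2})^*\|=\|Q^{1/2}P^{1/2}\|$ and the adjoint form of the factorization as needed.

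The main obstacle is this Douglas-type factorization. Its finite-dimensional proof is to define $K$ on $\ran(I-P)^{1/2}$ by $K(I-P)^{1/2}u=Q^{1/2}u$. This is well defined and contractive by the norm comparison, and we extend $K$ by zero on the orthogonal complement. With the factorization in hand, the bound \eqref{eq:cross} follows.
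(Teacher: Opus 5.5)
Your proof is correct, but it takes a longer route than the paper after the point where the two coincide. Your column operator $Z=\binom{X}{Y}$ is exactly the adjoint of the paper's row operator $L=\bigl[(D^{\dagger})^{1/2}S^{1/2}\;\;(D^{\dagger})^{1/2}T^{1/2}\bigr]$. So your positive contraction $ZZ^*$ is the paper's $L^*L$.

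\emph{Where the paper stops.} Since $0\le L^*L\le I$, one has $\|L^*L-\tfrac12 I\|_\infty\le\tfrac12$. Subtracting $\tfrac12 I$ does not change the off-diagonal block $S^{1/2}D^{\dagger}T^{1/2}$, so the bound follows in one line. No restriction to $\ran D$ is needed either, because $LL^*\le P_D\le I$ handles $D^\dagger$ directly.

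\emph{Your route.} You instead use polar decompositions and a Douglas factorization to reduce to the scalar inequality $\sqrt{\lambda(1-\lambda)}\le\frac12$ on $\sigma(P)\subseteq[0,1]$. The paper's $\frac12$ comes from $|\lambda-\frac12|\le\frac12$. Your version costs more steps, but it gives the slightly sharper bound $\max_{\lambda\in\sigma(P)}\sqrt{\lambda(1-\lambda)}$ with $P=D^{-1/2}SD^{-1/2}$ on $\ran D$. The paper does not need this refinement.

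\emph{A shortcut.} The Douglas lemma can be skipped. From $P+Q\le I$,
\[
\|Q^{1/2}P^{1/2}v\|^2=\langle QP^{1/2}v,P^{1/2}v\rangle\le\langle(I-P)P^{1/2}v,P^{1/2}v\rangle=\langle P(I-P)v,v\rangle\le\tfrac14\|v\|^2 .
\]

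\emph{Small slip and cleanup.} The inequality $\|Q^{1/2}u\|\le\|(I-P)^{1/2}u\|$ follows directly from $\langle Qu,u\rangle\le\langle (I-P)u,u\rangle$. Operator monotonicity of $t\mapsto t^{1/2}$ gives the different statement $Q^{1/2}\le (I-P)^{1/2}$, which you never use. You should also delete the abandoned sentence about writing $XY^*=X(X^*X+Y^*Y)^{\dots}$ and the unused quantity $a=\|X\|_\infty^2$.
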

\begin{proof}
Let
\[
 L=\bigl[\,(D^{\dagger})^{1/2}S^{1/2}
            \quad(D^{\dagger})^{1/2}T^{1/2}\,\bigr],
\]
viewed as an operator from $\C^{d}\oplus\C^{d}$ to $\C^{d}$. Then
\[
 LL^{*}=(D^{\dagger})^{1/2}(S+T)(D^{\dagger})^{1/2}
 \leq(D^{\dagger})^{1/2}D(D^{\dagger})^{1/2}
 =P_{D}\leq I_{d}.
\]
Consequently $\norm{L}{\infty}\leq1$ and
$0\leq L^{*}L\leq I_{2d}$. The eigenvalues of $L^{*}L$ lie in $[0,1]$,
so
\[
 \norm{L^{*}L-\tfrac12I_{2d}}{\infty}\leq\frac12.
\]
For $u,v\in\C^{d}$, block multiplication gives
\[
\left|\left\langle S^{1/2}D^{\dagger}T^{1/2}v,u\right\rangle\right|=\left|\left\langle\left(L^{*}L-\tfrac12I_{2d}\right)(0,v),(u,0)\right\rangle\right|\leq\frac12\norm{u}{\C^{d}}\norm{v}{\C^{d}}.
\]
Taking the supremum over unit vectors proves \eqref{eq:cross}.
\end{proof}

We now introduce the following \emph{three operators symmetrization mapping}. For $A,B,C\in M_{d}^{\sa}$, define
\[
 D(A,B,C)=|A-B|+|B-C|+|C-A|,
 \qquad \operatorname{Re}T=\frac{T+T^{*}}2.
\]
The positive matrix $D(A,B,C)$ is unchanged by permuting its three
arguments. In formulas involving a fixed triple we write simply
$D=D(A,B,C)$. We now define the \emph{mixing operators mapping} as follows
\begin{equation}
 \begin{split}
 \mm(A,B,C)=A+\operatorname{Re}\bigl[
     &(B-A)D^{\dagger}|C-A|+(C-A)D^{\dagger}|B-A|\bigr].
 \end{split}
 \label{eq:median}
\end{equation}
If $D=0$, then
$A=B=C$ and \eqref{eq:median} gives $\mm(A,A,A)=A$.

\begin{lemma}\label{lem:matrix}
The operation $\mm$ is invariant under every permutation of its arguments.
For each fixed $B,C\in M_{d}^{\sa}$ there exists a map
$V_{B,C}:M_{d}^{\sa}\to M_{d}$ such that, for every $A\in M_{d}^{\sa}$,
\begin{align}
 \mm(A,B,C)
 &=B+V_{B,C}(A)|C-B|^{1/2}
      +|C-B|^{1/2}V_{B,C}(A)^{*},
 \label{eq:factor}\\
 \norm{V_{B,C}(A)}{2}
 &\leq\frac12\norm{A-B}{1}^{1/2}.
 \label{eq:Zbound}
\end{align}
In particular,
\begin{equation}
 \norm{\mm(A,B,C)-B}{1}
 \leq\sqrt{\norm{A-B}{1}\norm{C-B}{1}}.
 \label{eq:medianbound}
\end{equation}
\end{lemma}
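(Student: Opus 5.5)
The plan is to verify permutation invariance by direct algebra, using the support identity \eqref{eq:support}. Once invariance is known, I can rewrite $\mm(A,B,C)$ with $B$ as the base point and read off $V_{B,C}(A)$ explicitly. The bound \eqref{eq:Zbound} should then follow from Lemma \ref{lem:cross} combined with the noncommutative H\"older inequality \eqref{eq:holder}.

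\emph{Invariance.} Swapping $B$ and $C$ in \eqref{eq:median} only interchanges the two summands, and $D$ is symmetric, so this case is immediate. Since transpositions generate the symmetric group, it then suffices to treat the swap of $A$ and $B$. Put $X=B-A$ and $Y=C-A$, so that $C-B=Y-X$. Expanding both sides gives
\[
\mm(A,B,C)-\mm(B,A,C)=\operatorname{Re}\bigl[XD^{\dagger}(|Y|+|Y-X|+|X|)\bigr]-X=\operatorname{Re}[XP_D]-X.
\]
Here the $(Y-X)D^{\dagger}|X|$ and $YD^{\dagger}|X|$ terms combine to leave $XD^{\dagger}|X|$. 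Since $|X|\leq D$, \eqref{eq:support} gives $XP_D=X$, so the difference is $\operatorname{Re}X-X=0$ because $X$ is self-adjoint.

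\emph{Factorization.} By invariance,
\[
\mm(A,B,C)=B+\operatorname{Re}\bigl[(A-B)D^{\dagger}|C-B|+(C-B)D^{\dagger}|A-B|\bigr].
\]
Write $E=|C-B|$, $U=\sgn(C-B)$, $F=|A-B|$ and $W=\sgn(A-B)$. Since $U$ commutes with $E$, we have $C-B=E^{1/2}UE^{1/2}$. Define
\[
V_{B,C}(A)=\tfrac12\bigl[(A-B)D^{\dagger}E^{1/2}+FD^{\dagger}E^{1/2}U\bigr].
\]
Then $V_{B,C}(A)E^{1/2}=\tfrac12\bigl[(A-B)D^{\dagger}E+FD^{\dagger}(C-B)\bigr]$. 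Adding its adjoint $E^{1/2}V_{B,C}(A)^{*}$ produces exactly the real part above, which proves \eqref{eq:factor}. The one point to check is that the adjoint of $FD^{\dagger}(C-B)$ is $(C-B)D^{\dagger}F$, which holds because all four matrices are self-adjoint.

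\emph{Norm bound.} Rewrite $V_{B,C}(A)=\tfrac12\bigl[WF^{1/2}\cdot(F^{1/2}D^{\dagger}E^{1/2})+F^{1/2}\cdot(F^{1/2}D^{\dagger}E^{1/2})U\bigr]$. Since $F+E\leq D$, Lemma \ref{lem:cross} with $S=F$ and $T=E$ gives $\norm{F^{1/2}D^{\dagger}E^{1/2}}{\infty}\leq\frac12$. Apply H\"older with $\norm{W}{\infty},\norm{U}{\infty}\leq1$ and $\norm{F^{1/2}}{2}=\norm{A-B}{1}^{1/2}$. Each of the two terms then has Hilbert--Schmidt norm at most $\frac12\norm{A-B}{1}^{1/2}$, and the prefactor $\frac12$ yields \eqref{eq:Zbound}. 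Finally, $\norm{V_{B,C}(A)E^{1/2}}{1}\leq\norm{V_{B,C}(A)}{2}\norm{E^{1/2}}{2}$ with $\norm{E^{1/2}}{2}=\norm{C-B}{1}^{1/2}$. Taking both terms in \eqref{eq:factor} gives \eqref{eq:medianbound}.

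The main obstacle is finding the right splitting of $V_{B,C}(A)$. Each term must carry a factor $F^{1/2}D^{\dagger}E^{1/2}$ so that Lemma \ref{lem:cross} applies, and the constant $\frac12$ from that lemma is what keeps the final constant equal to $\frac12$.
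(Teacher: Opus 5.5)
Your proposal is correct and follows the paper's proof essentially step for step. You use the same $A\leftrightarrow B$ subtraction closed off by \eqref{eq:support}, the same explicit $V_{B,C}(A)$, and the same H\"older estimate through Lemma~\ref{lem:cross} with $S=|A-B|$, $T=|C-B|$. The only cosmetic difference is that you write $A-B=\sgn(A-B)|A-B|^{1/2}\cdot|A-B|^{1/2}$ directly, whereas the paper first bounds both terms by $\norm{|A-B|D^{\dagger}|C-B|^{1/2}}{2}$ and then splits off $|A-B|^{1/2}$.
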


\begin{proof}
Interchanging $B,C$ in \eqref{eq:median} interchanges its two summands
and leaves $D$ unchanged. To interchange $A,B$, subtract the two
defining formulas. Since $|A-B|=|B-A|$, this gives
\[
 \begin{aligned}
 &\mm(A,B,C)-\mm(B,A,C)\\
 =&A-B+\operatorname{Re}\bigl[
       (B-A)D^{\dagger}\left(|C-A|+|C-B|\right)\\
 &+\left((C-A)-(C-B)\right)D^{\dagger}|A-B|\bigr]\\
 =&A-B+\operatorname{Re}\left[
       (B-A)D^{\dagger}
       \left(|C-A|+|C-B|+|A-B|\right)\right]\\
 =&A-B+\operatorname{Re}\left[(B-A)D^{\dagger}D\right]=0.
 \end{aligned}
\]
The last equality follows from $D^{\dagger}D=P_{D}$ and
$(B-A)P_{D}=B-A$, by \eqref{eq:support}. The transpositions of the
first two and the last two arguments generate every permutation, proving
symmetry.

For fixed $B,C$, define, with $D=D(A,B,C)$,
\begin{equation}\label{eq:Z}
 V_{B,C}(A)=\frac12\bigl[(A-B)D^{\dagger}|C-B|^{1/2}+|A-B|D^{\dagger}|C-B|^{1/2}\sgn(C-B)\bigr].
\end{equation}
The spectral functions $|C-B|^{1/2}$ and $\sgn(C-B)$ commute and satisfy
\[
 |C-B|^{1/2}\sgn(C-B)|C-B|^{1/2}=C-B.
\]
Therefore
\[
V_{B,C}(A)|C-B|^{1/2}
 =\frac12\bigl[(A-B)D^{\dagger}|C-B|+|A-B|D^{\dagger}(C-B)\bigr].
\]
Adding the adjoint, using $\operatorname{Re}(T^{*})=\operatorname{Re}T$ and the definition in \eqref{eq:median} proves \eqref{eq:factor}.

By \eqref{eq:cross}, applied to $|A-B|+|C-B|\leq D$,
\[
 \norm{|A-B|^{1/2}D^{\dagger}|C-B|^{1/2}}{\infty}\leq\frac12.
\]
By \eqref{eq:Z} and \eqref{eq:holder}, we have
\begin{align*}
\norm{V_{B,C}(A)}{2}&\leq \frac{1}{2}\norm{(A-B)D^{\dagger}|C-B|^{1/2}}{2}\\
&\quad+\frac{1}{2}\norm{|A-B|D^{\dagger}|C-B|^{1/2}\sgn(C-B)}{2}\\
&\leq \norm{|A-B|D^{\dagger}|C-B|^{1/2}}{2}\\
&\leq \||A-B|^{1/2}\|_{2}\cdot\norm{|A-B|^{1/2}D^{\dagger}|C-B|^{1/2}}{\infty}\\
&\leq \frac{1}{2}\norm{|A-B|^{1/2}}{2}=\frac{1}{2} \norm{A-B}{1}^{1/2},
\end{align*}
which verifies \eqref{eq:Zbound}. Applying \eqref{eq:holder} to
\eqref{eq:factor} now gives
\[
 \norm{\mm(A,B,C)-B}{1}
 \leq2\norm{V_{B,C}(A)}{2}\norm{|C-B|^{1/2}}{2}
 \leq\sqrt{\norm{A-B}{1}\norm{C-B}{1}},
\]
as required.
\end{proof}

\section{Expectation of the matrix operation}\label{expectation section}

All expectations of finite-dimensional matrices are taken entry by entry.
The notation $\E_X$ means integration with respect to $X$ alone, with any
other displayed variables held fixed.

\begin{proposition}\label{prop:probability}
Let $X$ be a random matrix in $M_d^{\sa}$ with finite range, and let $B,C$
be independent copies of $X$. For every $A_0\in M_d^{\sa}$,
\begin{equation}
 \norm{\E\mm(X,B,C)-A_0}{1}
 \leq 3\E\norm{X-A_0}{1}.
 \label{eq:approx}
\end{equation}
Suppose that $X$ and real random variables $\xi_1,\ldots,\xi_n$ are defined
on the same probability space, that $B,C$ are independent of that space
and of each other, and that, for some $\kappa\geq0$,
\begin{equation}
 \E\xi_j=0,\qquad \E(\xi_i\xi_j)=\kappa^2\delta_{ij}.
 \label{eq:scores}
\end{equation}
Here $\delta_{ij}$ is $1$ if $i=j$ and $0$ otherwise. Then
\begin{equation}
 \left(\sum_{j=1}^n
   \norm{\E\bigl[\xi_j\mm(X,B,C)\bigr]}{1}^2\right)^{1/2}
 \leq\kappa\E\norm{X-B}{1}.
 \label{eq:scorebound}
\end{equation}
No independence between $X$ and the $\xi_j$ is required.
\end{proposition}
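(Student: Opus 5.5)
For \eqref{eq:approx} I will use only the pointwise estimate \eqref{eq:medianbound} together with the symmetry of $\mm$ from Lemma \ref{lem:matrix}. Put $a=\E\norm{X-A_0}{1}$, and note that $B$ and $C$ have the same law as $X$. By the symmetry of $\mm$, \eqref{eq:medianbound} applies with $B$ as the base point. Combining it with AM--GM gives
\[
\norm{\mm(X,B,C)-A_0}{1}\leq\norm{B-A_0}{1}+\tfrac12\bigl(\norm{X-B}{1}+\norm{C-B}{1}\bigr).
\]
The triangle inequality through $A_0$ bounds $\norm{X-B}{1}$ and $\norm{C-B}{1}$. Then
\[
\norm{\mm(X,B,C)-A_0}{1}\leq 2\norm{B-A_0}{1}+\tfrac12\norm{X-A_0}{1}+\tfrac12\norm{C-A_0}{1}.
\]
Next I pull the trace norm outside the expectation, which is allowed since the expectation is entrywise and $\norm{\cdot}{1}$ is convex. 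Each term then has expectation $a$, so the bound is $(2+\tfrac12+\tfrac12)a=3a$.

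For \eqref{eq:scorebound}, the first step is to recentre. Since $\E\xi_j=0$ and $B$ is independent of $\xi_j$, we have $\E[\xi_jB]=0$. Hence $\E[\xi_j\mm(X,B,C)]=\E[\xi_j(\mm(X,B,C)-B)]$. Now I insert the factorization \eqref{eq:factor} and condition on $(B,C)$. Define
\[
Y_j(B,C)=\E_X\bigl[\xi_j V_{B,C}(X)\bigr]\in M_d .
\]
This is legitimate because $(X,\xi)$ is independent of $(B,C)$. Then
\[
\E[\xi_j\mm(X,B,C)]=\E_{B,C}\bigl[Y_j|C-B|^{1/2}+|C-B|^{1/2}Y_j^*\bigr].
\]
By H\"older \eqref{eq:holder}, the trace norm of this is at most $2\E_{B,C}\bigl[\norm{Y_j}{2}\norm{C-B}{1}^{1/2}\bigr]$. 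Minkowski's inequality for the $\ell_2$-sum over $j$ against $\E_{B,C}$ then gives
\[
\Bigl(\sum_j\norm{\E[\xi_j\mm]}{1}^2\Bigr)^{1/2}\leq 2\,\E_{B,C}\Bigl[\norm{C-B}{1}^{1/2}\Bigl(\sum_j\norm{Y_j}{2}^2\Bigr)^{1/2}\Bigr].
\]

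The key step is a Bessel inequality in the Hilbert space $L_2(\Omega;M_d)$ with the Hilbert--Schmidt norm. If $\kappa>0$, the variables $\xi_j/\kappa$ are orthonormal in $L_2$ by \eqref{eq:scores}. Applying Bessel entrywise, or equivalently in an orthonormal basis of $M_d$, gives
\[
\sum_j\norm{\E_X[\xi_jV_{B,C}(X)]}{2}^2\leq\kappa^2\,\E_X\norm{V_{B,C}(X)}{2}^2\leq\tfrac{\kappa^2}{4}\E_X\norm{X-B}{1},
\]
where the last inequality is \eqref{eq:Zbound}. If $\kappa=0$, every $\xi_j$ vanishes almost surely and there is nothing to prove.

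Substituting, the left side of \eqref{eq:scorebound} is at most $\kappa\,\E_{B,C}\bigl[\norm{C-B}{1}^{1/2}(\E_X\norm{X-B}{1})^{1/2}\bigr]$. Cauchy--Schwarz in $(B,C)$ bounds this by $\kappa\,(\E\norm{C-B}{1})^{1/2}(\E\norm{X-B}{1})^{1/2}$. Since $C$ and $X$ have the same law and are both independent of $B$, the two factors coincide, which yields $\kappa\E\norm{X-B}{1}$.

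I expect the main obstacle to be exactly this last stage. One has to keep the square root $\norm{C-B}{1}^{1/2}$ outside the Bessel step, applying Bessel only in the $X$-variable conditionally on $(B,C)$. Only then do the two half-powers recombine into a single first moment, rather than producing a second moment of $\norm{X-B}{1}$.
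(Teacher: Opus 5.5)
Your proof is correct and follows the paper's argument. It uses the same conditional coefficients $Y_j(B,C)$, the same Bessel inequality in the $X$-variable combined with \eqref{eq:Zbound}, the same H\"older and $\ell_2$-Minkowski steps, and a final Cauchy--Schwarz that merges the two half-powers into $\kappa\E\norm{X-B}{1}$. The deviations are minor and give the same constants: for \eqref{eq:approx} you bound $\sqrt{\norm{X-B}{1}\norm{C-B}{1}}$ by AM--GM, so independence is not needed there, where the paper uses Cauchy--Schwarz conditionally on $B$; and at the end you apply Cauchy--Schwarz jointly in $(B,C)$ rather than in $C$ alone.
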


\begin{proof}
By independence of $X$ and $C$ for fixed $B$, followed by the scalar
Cauchy--Schwarz inequality,
\begin{align}
 \E\sqrt{\norm{X-B}{1}\norm{C-B}{1}}
 &=\E_B\left(\E_X\sqrt{\norm{X-B}{1}}\right)^2\leq\E_B\E_X\norm{X-B}{1}.
 \label{eq:meanroot}
\end{align}
Consequently, \eqref{eq:medianbound} and the triangle inequality give
\begin{align*}
 \norm{\E\mm(X,B,C)-A_0}{1}
 &\leq\E\norm{\mm(X,B,C)-B}{1}+\E\norm{B-A_0}{1}\\
 &\leq\E\norm{X-B}{1}+\E\norm{B-A_0}{1}\\
 &\leq\E\norm{X-A_0}{1}+2\E\norm{B-A_0}{1}
 =3\E\norm{X-A_0}{1}.
\end{align*}
This proves \eqref{eq:approx}.

Assume without loss of generality that $\kappa>0$. For fixed
$B,C$, use $V_{B,C}(X)$ from Lemma~\ref{lem:matrix} and define only the
coefficients needed for the orthogonality argument:
\[
 Y_j(B,C)=\E_\omega\bigl[\xi_j(\omega)V_{B,C}(X(\omega))\bigr],
 \qquad 1\leq j\leq n.
\]
The subscript $\omega$ means that $B,C$ remain fixed. Expanding the
Hilbert--Schmidt norm, using that the $\xi_j$ are real and
$\E(\xi_i\xi_j)=\kappa^2\delta_{ij}$, gives
\begin{align*}
 0&\leq\E_\omega\left\lVert
 V_{B,C}(X)-\kappa^{-2}\sum_{j=1}^n\xi_jY_j(B,C)
 \right\rVert_2^2\\
 &=\E_\omega\norm{V_{B,C}(X)}{2}^2
   -\kappa^{-2}\sum_{j=1}^n\norm{Y_j(B,C)}{2}^2.
\end{align*}
In particular, \eqref{eq:Zbound} implies
\begin{equation}
 \sum_{j=1}^n\norm{Y_j(B,C)}{2}^2
 \leq\kappa^2\E_X\norm{V_{B,C}(X)}{2}^2
 \leq\frac{\kappa^2}{4}\E_X\norm{X-B}{1}.
 \label{eq:bessel}
\end{equation}
The equality defining $Y_j$ averages $\xi_j$ jointly with $X$; the last
expectation in \eqref{eq:bessel} only uses the marginal distribution of $X$.

Since $\E\xi_j=0$ and $\xi_j$ is real, \eqref{eq:factor} yields
\[
 \E_\omega[\xi_j\mm(X,B,C)]
 =Y_j(B,C)|C-B|^{1/2}+|C-B|^{1/2}Y_j(B,C)^*.
\]
For any integrable finite family of random matrices $(T_j)_{j=1}^n$,
\begin{equation}
 \left(\sum_j\norm{\E T_j}{1}^2\right)^{1/2}
 \leq\E\left(\sum_j\norm{T_j}{1}^2\right)^{1/2}.
 \label{eq:familytriangle}
\end{equation}
Indeed, $\norm{\E T_j}{1}\leq\E\norm{T_j}{1}$, and the Euclidean
triangle inequality for integrals gives the remaining inequality. Applying
\eqref{eq:familytriangle}, then \eqref{eq:holder} and \eqref{eq:bessel},
we obtain
\begin{align*}
& \left(\sum_{j=1}^n\norm{\E[\xi_j\mm(X,B,C)]}{1}^2\right)^{1/2}\\
 \leq& 2\E_{B,C}\left[
   \norm{|C-B|^{1/2}}{2}
   \left(\sum_{j=1}^n\norm{Y_j(B,C)}{2}^2\right)^{1/2}\right]\\
 \leq&\kappa\E_{B,C}
   \sqrt{\norm{C-B}{1}\,\E_X\norm{X-B}{1}}\\
 \leq&\kappa\E_B
   \sqrt{\bigl(\E_C\norm{C-B}{1}\bigr)
               \bigl(\E_X\norm{X-B}{1}\bigr)}\\
 =&\kappa\E_B\E_X\norm{X-B}{1}.
\end{align*}
For the penultimate line, Cauchy--Schwarz is applied only to $C$, with
$B$ fixed. The last equality holds because $C$ and $X$ have the same
distribution.
\end{proof}

\section{The differentiable approximation}\label{approximate section}

\begin{lemma}\label{lem:kernel}
The function
\begin{equation}
 \psi(t)=84|t|^5(1-|t|)^2\mathbf1_{[-1,1]}(t)
 \label{eq:kernel}
\end{equation}
is an even probability density in $C_c^1(\R)$, and
\begin{equation}
 \int_\R |t|\psi(t)\,dt=\frac23,
 \qquad
 \int_{\{\psi>0\}}\frac{\psi'(t)^2}{\psi(t)}\,dt=70.
 \label{eq:kernelmoments}
\end{equation}
Let $W=(W_1,\ldots,W_n)$ have independent coordinates with density $\psi$.
For $1\leq j\leq n$, define
\begin{equation}
 \xi_j(w)=
 \begin{cases}
 -\psi'(w_j)/\psi(w_j),&\psi(w_j)>0,\\
 0,&\psi(w_j)=0.
 \end{cases}
 \label{eq:xidef}
\end{equation}
Then
\begin{equation}
 \E\xi_j(W)=0,\qquad \E[\xi_i(W)\xi_j(W)]=70\delta_{ij}.
 \label{eq:kernelcovariance}
\end{equation}
If $U$ is independently uniform on $[0,1)^n$, then
$\lfloor U+W\rfloor$ is uniform on $\{-1,0,1\}^n$. Here the floor
function is applied to each coordinate.
\end{lemma}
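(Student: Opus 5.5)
The proof is a direct computation with the explicit polynomial density, organized in four steps.

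\emph{Step 1: $\psi$ is a density in $C_c^1$.} On $[0,1]$ we have $\psi(t)=84t^5(1-t)^2$, which is even and nonnegative. Its integral is
\[
2\cdot 84\,\mathrm{B}(6,3)=168\cdot\frac{5!\,2!}{8!}=1 .
\]
For smoothness, $\psi'(t)=84\bigl(5t^4(1-t)^2-2t^5(1-t)\bigr)$ on $(0,1)$. This vanishes as $t\to0^+$ (because of the factor $t^4$) and as $t\to1^-$ (because of the factor $1-t$). Together with evenness, $\psi'$ is continuous on $\R$ and supported in $[-1,1]$.

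\emph{Step 2: the two moments.} First,
\[
\int|t|\psi=168\,\mathrm{B}(7,3)=168\cdot\frac{6!\,2!}{9!}=\frac{2}{3}.
\]
For the Fisher information, write on $(0,1)$
\[
\frac{\psi'}{\psi}=\frac{5}{t}-\frac{2}{1-t},
\]
so that
\[
\frac{\psi'^2}{\psi}=84\,t^5(1-t)^2\Bigl(\frac{5}{t}-\frac{2}{1-t}\Bigr)^2=84\bigl(5t^2(1-t)-2t^3\bigr)^2\cdot\frac{t}{1-t}\cdot\frac{1}{t^0}.
\]
It is cleaner to expand directly:
\[
84\bigl(25t^3(1-t)^2-20t^4(1-t)+4t^5\bigr),
\]
and integrate with Beta integrals:
\[
25\,\mathrm{B}(4,3)=\frac{25}{60},\qquad 20\,\mathrm{B}(5,2)=\frac{20}{30},\qquad 4\cdot\frac16 .
\]
Hence
\[
\int_0^1\frac{\psi'^2}{\psi}=84\Bigl(\frac{5}{12}-\frac{2}{3}+\frac{2}{3}\Bigr)=35,
\]
and doubling for the two half-lines gives $70$.

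\emph{Step 3: the score variables.} We have $\E\xi_j(W)=-\int_{\{\psi>0\}}\psi'=-\int_\R\psi'=0$, since $\psi'$ is continuous, compactly supported, and vanishes where $\psi=0$ (at $0$ and at $\pm1$). Independence of the coordinates then gives $\E[\xi_i\xi_j]=\E\xi_i\,\E\xi_j=0$ for $i\ne j$. For $i=j$ we get $\E\xi_j^2=\int\psi'^2/\psi=70$ by Step 2. All integrals converge, since $\psi'^2/\psi$ is a polynomial on each half of the support.

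\emph{Step 4: the floor distribution.} By independence of coordinates it suffices to treat $n=1$. Let $U$ be uniform on $[0,1)$ and $W$ have density $\psi$. Conditioning on $W=w$, we get $\lfloor U+w\rfloor=\lfloor w\rfloor+\mathbf 1\{U\ge 1-\{w\}\}$, so
\[
P(\lfloor U+w\rfloor=\lfloor w\rfloor+1)=\{w\}.
\]
For $w\in[0,1)$ this gives
\[
P(1\mid w)=w,\qquad P(0\mid w)=1-w .
\]
For $w\in[-1,0)$ it gives
\[
P(0\mid w)=1+w,\qquad P(-1\mid w)=-w=|w| .
\]
Thus $P(1)=\int_0^1 w\psi=\frac13$ by Step 2, $P(-1)=\frac13$ by symmetry, and $P(0)=\frac13$. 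So the law is uniform on $\{-1,0,1\}$.

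The only delicate points are the boundary behaviour of $\psi'$ (its continuity at $0$ and $\pm1$, and its vanishing off the support), which justifies $\E\xi_j=0$, and the arithmetic in the Fisher-information integral. Neither step is conceptually hard.
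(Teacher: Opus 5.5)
Your proof is correct and follows essentially the same route as the paper. You evaluate the polynomial integrals directly (via Beta integrals and the expansion of $\psi\,(\psi'/\psi)^2$ with $\psi'/\psi=\frac5t-\frac{2}{1-t}$, where the paper uses $\psi'^2/\psi=84t^3(5-7t)^2$ on $(0,1)$), get the score identities from $\int_\R\psi'=0$ and independence, and condition on $W_j$ to get $\mathbb{P}(\lfloor U_j+W_j\rfloor=1)=\int_0^1 w\psi(w)\,dw=\frac13$.

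The only blemish is the abandoned intermediate expression $84\bigl(5t^2(1-t)-2t^3\bigr)^2\frac{t}{1-t}$, which is not equal to $\psi'^2/\psi$ and should be deleted. The direct expansion that follows it is right, giving $35$ on $(0,1)$ and hence $70$.
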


\begin{proof}
The function and its derivative vanish at $\pm1$, and the factor $|t|^5$
makes the derivative continuous at zero. Hence $\psi\in C_c^1(\R)$.
For $0<t<1$,
\[
 \psi'(t)=84t^4(1-t)(5-7t),\qquad
 \frac{\psi'(t)^2}{\psi(t)}=84t^3(5-7t)^2.
\]
Evenness and integration give
\begin{align*}
 \int_\R\psi(t)\,dt
 &=168\left(\frac16-\frac27+\frac18\right)=1,\\
 \int_\R|t|\psi(t)\,dt
 &=168\left(\frac17-\frac28+\frac19\right)=\frac23,\\
 \int_{\{\psi>0\}}\frac{\psi'(t)^2}{\psi(t)}\,dt
 &=168\left(\frac{25}{4}-\frac{70}{5}+\frac{49}{6}\right)=70.
\end{align*}
Since $\{\psi(W_j)=0\}$ has probability zero,
\[
 \E\xi_j(W)=-\int_\R\psi'(t)\,dt=0,
 \qquad \E\xi_j(W)^2=70.
\]
For $i\ne j$, independence yields
$\E[\xi_i(W)\xi_j(W)]=\E\xi_i(W)\E\xi_j(W)=0$, which proves the assertion \eqref{eq:kernelcovariance}.

Fix $w\in[-1,1]$. The subsets of $[0,1)$ on which
$\lfloor u+w\rfloor$ equals $1,-1,0$ have lengths, respectively,
\[
 \max\{w,0\},\qquad \max\{-w,0\},\qquad 1-|w|.
\]
Therefore, by evenness and \eqref{eq:kernelmoments},
\[
 \mathbb{P}(\lfloor U_j+W_j\rfloor=1)
 =\mathbb{P}(\lfloor U_j+W_j\rfloor=-1)=\frac12\E|W_j|=\frac13,
\]
and the remaining probability is $1-\E|W_j|=1/3$.
The pairs $(U_j,W_j)$ are independent across $j$. Thus, for every
$\varepsilon\in\{-1,0,1\}^n$,
\begin{equation}
 \mathbb{P}(\lfloor U+W\rfloor=\varepsilon)
 =\prod_{j=1}^n\mathbb{P}(\lfloor U_j+W_j\rfloor=\varepsilon_j)=3^{-n}.
 \label{eq:productrounding}
\end{equation}
\end{proof}

Let $F:\R^n\to M_d^{\sa}$ be measurable and have finite range. Let $W^{(1)}$, $W^{(2)}$, $W^{(3)}$ be three independent copies of the vector $W$ in
Lemma \ref{lem:kernel}. We define
\begin{equation}\label{eq:smoothing}
 g(z)=\E\mm\bigl(F(z+W^{(1)}),F(z+W^{(2)}),F(z+W^{(3)})\bigr).
\end{equation}
It is easy to see that if $F$ has period $m$ in each coordinate, then so does $g$.

\begin{proposition}\label{prop:smoothing}
The function $g$ in \eqref{eq:smoothing} belongs to
$C^1(\R^n;M_d^{\sa})$. For every $z\in\R^n$, the following two inequalities hold
\begin{equation}\label{eq:localapprox}
 \norm{g(z)-F(z)}{1}\leq3\E_W\norm{F(z+W)-F(z)}{1},
\end{equation}
and
\begin{equation}\label{eq:localgradient}
\left(\sum_{j=1}^n\norm{\partial_jg(z)}{1}^2\right)^{1/2}\leq6\sqrt{70}\,\E_W\norm{F(z+W)-F(z)}{1}.
\end{equation}
\end{proposition}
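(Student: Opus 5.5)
Fix $z\in\R^n$ and write $g$ as an integral against the product density. With $\Psi(w)=\prod_{j}\psi(w_j)$, substituting $y_i=z+w^{(i)}$ gives
\[
 g(z)=\int_{(\R^n)^3}\mm\bigl(F(y_1),F(y_2),F(y_3)\bigr)\,\Psi(y_1-z)\Psi(y_2-z)\Psi(y_3-z)\,dy_1\,dy_2\,dy_3 .
\]
The integrand $\mm(F(y_1),F(y_2),F(y_3))$ is measurable and takes finitely many values, since $F$ has finite range. It is therefore bounded, and $z$ enters only through $\Psi$, which lies in $C^1_c$ because $\psi\in C^1_c(\R)$ by Lemma~\ref{lem:kernel}. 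Differentiation under the integral sign is legitimate: $\partial_{z_j}\Psi(y-z)$ is continuous, uniformly bounded and compactly supported locally uniformly in $z$, so dominated convergence applies. This gives $g\in C^1$. Self-adjointness of $g$ follows because $\mm$ takes values in $M_d^{\sa}$ (it is $A$ plus a real part) and expectations preserve self-adjointness.

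Differentiating the product yields three terms, one for each $i=1,2,3$. Using $\partial_{z_j}\psi(w_j)=-\psi'(w_j)$, and noting that the set $\{\psi(W_j)=0\}$ is null, each term equals
\[
 \E\bigl[\xi_j(W^{(i)})\,\mm\bigl(F(z+W^{(1)}),F(z+W^{(2)}),F(z+W^{(3)})\bigr)\bigr],
\]
with $\xi_j$ as in \eqref{eq:xidef}.

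By the permutation invariance in Lemma~\ref{lem:matrix} and the identical distribution of the $W^{(i)}$, the three terms coincide. Hence
\[
 \partial_j g(z)=3\,\E\bigl[\xi_j(W^{(1)})\,\mm(X,B,C)\bigr],
\]
where $X=F(z+W^{(1)})$, $B=F(z+W^{(2)})$ and $C=F(z+W^{(3)})$. Apply Proposition~\ref{prop:probability} on the probability space of $W^{(1)}$, with $\xi_j=\xi_j(W^{(1)})$. By \eqref{eq:kernelcovariance}, $\kappa=\sqrt{70}$, and $B,C$ are independent copies of $X$ that are independent of this space. The proposition gives
\[
 \Bigl(\sum_j\norm{\partial_jg(z)}{1}^2\Bigr)^{1/2}\le 3\sqrt{70}\,\E\norm{X-B}{1}.
\]
Inserting $F(z)$ and using the triangle inequality, $\E\norm{X-B}{1}\le 2\,\E_W\norm{F(z+W)-F(z)}{1}$, which is \eqref{eq:localgradient}.

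For \eqref{eq:localapprox}, apply \eqref{eq:approx} directly with $A_0=F(z)$ and $X=F(z+W^{(1)})$, whose independent copies are $B$ and $C$.

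The main obstacle is the $C^1$ justification. $F$ is merely measurable, so differentiability has to come entirely from the kernel. One must check that differentiating under the integral is valid and that the resulting derivative is continuous. Both hold because $\partial\Psi$ is continuous and compactly supported and the integrand is bounded, so continuity of $z\mapsto\int(\cdot)\,\partial_j\Psi(y-z)\,dy$ follows from dominated convergence. The symmetry step, which reduces the three derivative terms to one so that Proposition~\ref{prop:probability} applies with the score attached to the first slot, also relies essentially on Lemma~\ref{lem:matrix}.
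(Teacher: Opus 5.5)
Your proposal is correct and follows the paper's proof essentially step for step. The paper differentiates the scalar weights $p_a(z)=\mathbb{P}(F(z+W)=A_a)$ in the finite-range expansion $g=\sum_{a,b,c}p_ap_bp_c\,\mm(A_a,A_b,A_c)$ instead of differentiating the triple integral directly, which is only a cosmetic difference. One small correction: converting $\int(\cdots)\,\partial_j\Psi(w)\,dw$ into $-\E[\xi_j(W)\cdots]$ requires $-\partial_j\Psi(w)=\xi_j(w)\Psi(w)$ Lebesgue-a.e. The correct reason is that $\psi'$ vanishes wherever $\psi$ does (since $\psi\geq0$ is $C^1$), not that $\{\psi(W_j)=0\}$ has probability zero, because that zero set has infinite Lebesgue measure.
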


\begin{proof}
Suppose without loss of generality that $\mathrm{rang}(F)=\{A_{1},\cdots,A_{q}\}\subseteq M^{\sa}_{d}$. Define density as follows
\[
 \rho(w)=\prod_{i=1}^n\psi(w_i),\qquad
 p_a(z)=\int_{\R^n}\mathbf1_{\{F(y)=A_a\}}\rho(y-z)\,dy,\qquad a\in \{1,\cdots, q\}.
\]
Changing variables gives $p_a(z)=\mathbb{P}(F(z+W)=A_a)$ for each $a\in \{1,\cdots, q\}$. In particular,
$p_a\geq0$ and $\sum_{a\in \{1,\cdots,q\}} p_a=1$.

By independence of the three copies and the fact that $F$ has finite range, we have
\begin{equation}\label{summation representation 1}
g(z)=\sum_{a,b,c=1}^q p_a(z)p_b(z)p_c(z)\mm(A_a,A_b,A_c).
\end{equation}

We next prove that $p_{a}\in C^{1}(\mathbb{R}^{n})$
for every $a\in\{1,\ldots,q\}$.
Recall that
\[
p_{a}(z)=
\int_{\mathbb{R}^{n}}
\mathbf{1}_{\{F(y)=A_{a}\}}\rho(y-z)\,dy,
\qquad
\rho(w)=\prod_{i=1}^{n}\psi(w_{i}).
\]
By Lemma \ref{lem:kernel}, $\psi\in C_{c}^{1}(\mathbb{R})$ is supported in $[-1,1]$. Consequently, $\rho=\prod_{i=1}^n\psi(w_i)\in C_{c}^{1}(\mathbb{R}^{n})$, and
\[
\partial_{j}\rho(w)=\psi^{\prime}(w_{j})\prod_{\substack{i=1\\i\neq j}}^{n}\psi(w_{i}),
\qquad 1\leq j\leq n.
\]
In particular, both $\rho$ and its first partial derivatives vanish outside $[-1,1]^{n}$.

Fix $z_{0}\in\mathbb{R}^{n}$ and
$j\in\{1,\ldots,n\}$, and consider the cube
\[
K=z_{0}+[-2,2]^{n}.
\]
For $0<|v|<1$, we have
\[
\frac{p_{a}(z_{0}+ve_{j})-p_{a}(z_{0})}{v}=\int_{\mathbb{R}^{n}}\mathbf{1}_{\{F(y)=A_{a}\}}\frac{\rho(y-z_{0}-ve_{j})-\rho(y-z_{0})}{v}\,dy.
\]
Since $\rho$ is continuously differentiable,
the fundamental theorem of calculus gives
\[
\frac{\rho(y-z_{0}-ve_{j})-\rho(y-z_{0})}{v}=-\int_{0}^{1}
\partial_{j}\rho(y-z_{0}-tve_{j})\,dt.
\]
For every $t\in[0,1]$ and $|v|<1$, the support of the function $y\mapsto \partial_{j}\rho(y-z_{0}-tve_{j})$ is contained in $z_{0}+tve_{j}+[-1,1]^{n}
\subseteq K$. Therefore we have
\[
\left|
\mathbf{1}_{\{F(y)=A_{a}\}}\left(\frac{\rho(y-z_{0}-ve_{j})-\rho(y-z_{0})}{v}\right)\right|\leq\left\|\partial_{j}\rho\right\|_{\infty}\mathbf{1}_{K}(y),
\]
where $K$ is a compact subset in $\mathbb{R}^{n}$. Moreover, for every fixed $y\in\mathbb{R}^{n}$,
\[
\lim_{v\to0}
\frac{\rho(y-z_{0}-ve_{j})-\rho(y-z_{0})}{v}=-\partial_{j}\rho(y-z_{0}),
\]
the dominated convergence entails that
\[
\partial_{j}p_{a}(z_{0})=-\int_{\mathbb{R}^{n}}\mathbf{1}_{\{F(y)=A_{a}\}}\partial_{j}\rho(y-z_{0})\,dy.
\]
Since $z_{0}$ and $j$ were arbitrary, we obtain
\begin{equation}\label{integral representation 1}
\partial_{j}p_{a}(z)=-\int_{\mathbb{R}^{n}}\mathbf{1}_{\{F(y)=A_{a}\}}\partial_{j}\rho(y-z)\,dy
\end{equation}
for every $z\in\mathbb{R}^{n}$ and
$j\in\{1,\ldots,n\}$.

It remains to verify the continuity of these partial derivatives. Suppose that $\{z^{(k)}\}_{k=1}^{\infty}\subseteq \mathbb{R}^{n}$ such that $z^{(k)}\to z_{0}$. For all sufficiently large $k$, we have
$z^{(k)}\in z_{0}+[-1,1]^{n}$, and therefore
\[
\left|\mathbf{1}_{\{F(y)=A_{a}\}}\partial_{j}\rho(y-z^{(k)})\right|\leq\left\|\partial_{j}\rho\right\|_{\infty}\mathbf{1}_{K}(y).
\]
On the other hand, the continuity of
$\partial_{j}\rho$ implies that
\[
\partial_{j}\rho(y-z^{(k)})
\longrightarrow
\partial_{j}\rho(y-z_{0})
\]
for every $y\in\mathbb{R}^{n}$. We apply the dominated convergence theorem again to obtain
\[
\lim_{k\to\infty}
\partial_{j}p_{a}(z^{(k)})
=
\partial_{j}p_{a}(z_{0}).
\]
Thus every first partial derivative of
$p_{a}$ is continuous, proving that
$p_{a}\in C^{1}(\mathbb{R}^{n})$.

We now express these derivatives in terms
of the random variables $\xi_{j}$.
Since $\psi$ is nonnegative and continuously
differentiable, $\psi'(t)=0$ whenever
$\psi(t)=0$. Hence, by the definition of
$\xi_{j}$ in (5.3), we have
\[
-\partial_{j}\rho(w)=\xi_{j}(w)\rho(w),
\qquad w\in\mathbb{R}^{n}.
\]
Furthermore, we have $\mathbb{E}_{W}|\xi_{j}(W)|=\int_{\mathbb{R}^{n}}|\xi_{j}(w)|\rho(w)\,dw=
\int_{\mathbb{R}}|\psi'(t)|\,dt<\infty$, where the last inequality follows from
$\psi\in C_{c}^{1}(\mathbb{R})$.
Changing variables $y=z+w$ in the integral representation \eqref{integral representation 1}, we obtain
\begin{equation}
\begin{aligned}
\partial_{j}p_{a}(z)
&=
-\int_{\mathbb{R}^{n}}
\mathbf{1}_{\{F(y)=A_{a}\}}
\partial_{j}\rho(y-z)\,dy\\
&=
\int_{\mathbb{R}^{n}}
\mathbf{1}_{\{F(z+w)=A_{a}\}}
\xi_{j}(w)\rho(w)\,dw\\
&=
\mathbb{E}_{W}\left[
\xi_{j}(W)
\mathbf{1}_{\{F(z+W)=A_{a}\}}
\right].
\end{aligned}
\end{equation}

Since each $p_{a}$ belongs to $C^{1}(\mathbb{R}^{n})$, \eqref{summation representation 1} implies that $g\in C^{1}(\mathbb{R}^{n};M_{d}^{\mathrm{sa}})$.
Applying the product rule to \eqref{summation representation 1}, we obtain
\begin{align*}
\partial_{j}g(z)&=\sum_{a,b,c=1}^{q}\Big[(\partial_{j}p_{a}(z))p_{b}(z)p_{c}(z)+p_{a}(z)(\partial_{j}p_{b}(z))p_{c}(z)\\
&\qquad+p_{a}(z)p_{b}(z)(\partial_{j}p_{c}(z))
\Big]\Phi_{\mathrm{mix}}(A_{a},A_{b},A_{c}).
\end{align*}
By Lemma \ref{lem:matrix}, $\Phi_{\mathrm{mix}}$ is invariant
under every permutation of its arguments. Hence,
\begin{equation}\label{summation representation 2}
\partial_{j}g(z)=3\sum_{a,b,c=1}^{q}(\partial_{j}p_{a}(z))p_{b}(z)p_{c}(z)\Phi_{\mathrm{mix}}(A_{a},A_{b},A_{c}).
\end{equation}

Finally, by \eqref{integral representation 1} and the independence
of $W^{(1)},W^{(2)},W^{(3)}$, we have
\begin{equation}\label{representation of partial derivative}
\begin{split}
&(\partial_{j}p_{a}(z))p_{b}(z)p_{c}(z)\\
=&\mathbb{E}\Big[
\xi_{j}(W^{(1)})
\mathbf{1}_{\{F(z+W^{(1)})=A_{a}\}}\cdot\mathbf{1}_{\{F(z+W^{(2)})=A_{b}\}}\mathbf{1}_{\{F(z+W^{(3)})=A_{c}\}}\Big]
\end{split}
\end{equation}
Substituting \eqref{representation of partial derivative} into \eqref{summation representation 2} yields
\begin{equation}\label{derivative}
\partial_{j}g(z)=3\mathbb{E}\Big[\xi_{j}(W^{(1)})\Phi_{\mathrm{mix}}\big(F(z+W^{(1)}),F(z+W^{(2)}),F(z+W^{(3)})\big)\Big].
\end{equation}
For each fixed $z$, we apply \eqref{eq:approx} to the three independent
matrices $F(z+W^{(1)}),F(z+W^{(2)}),F(z+W^{(3)})$, with $A_0=F(z)$. This verifies \eqref{eq:localapprox}. Combining \eqref{eq:scorebound},  \eqref{eq:kernelcovariance} with \eqref{derivative} gives
\begin{align*}
 \left(\sum_{j=1}^n\norm{\partial_jg(z)}{1}^2\right)^{1/2}
 &\leq3\sqrt{70}\,
 \E\norm{F(z+W^{(1)})-F(z+W^{(2)})}{1}\\
 &\leq6\sqrt{70}\,\E_W\norm{F(z+W)-F(z)}{1},
\end{align*}
where the last step follows from the triangle inequality and the same distribution of $W^{(1)}$ and $W^{(2)}$. This proves \eqref{eq:localgradient}. Finally, any identity $F(z+me_j)=F(z)$ for all $z$ holds in each argument of \eqref{eq:smoothing}, and gives $g(z+me_j)=g(z)$.
\end{proof}

\section{The metric cotype $2$ with optimal scaling parameter}\label{full proof}

We now provide a proof of the main theorem of this paper.

\begin{proof}[Proof of Theorem \ref{thm:main}]
{\bf Step I. Finite dimensional self-adjoint case.} Suppose that $f:\Z_m^n\to M_d^{\sa}$ and equip $\T_m^n=(\R/m\Z)^n$ with normalized Lebesgue measure
$d\nu(z)=m^{-n}\,dz$ on $[0,m)^n$. In the sequel, integer vectors in the argument of $f$ are reduced coordinatewise modulo $m$. Define
\begin{equation}\label{eq:cells}
 F(z)=f(\lfloor z\rfloor)=f(\lfloor z_{1}\rfloor,\cdots, \lfloor z_{n}\rfloor),\qquad z\in\R^n,
\end{equation}
It is clear that $F$ is measurable and has finite range such that
\[
F(z+me_j)=F(z),\qquad z\in \mathbb{R}^{n}.
\]
In the sequel, we view $F$ as the function defined on $\mathbb{T}^{n}_{m}$ taking values in $M^{\sa}_{d}$. For any integrable function $h:\mathbb{T}^{n}_{m}\to \mathbb{R}$, by the decomposition of the cube $\mathbb{T}^{n}_{m}$, we have the following
\begin{equation}\label{eq:cellintegration}
\begin{split}
 \int_{\T_m^n}h(z)\,d\nu(z)&=\frac{1}{m^{n}}\int_{\sqcup_{x\in \{0,\cdots, m-1\}^{n}}[x,x+1)^{n}}h(z)~dz\\
&=\frac{1}{m^{n}}\sum_{x\in \{0,\cdots, m-1\}^{n}}\int_{[x,x+1)^{n}}h(z)~dz\\
&=\frac{1}{m^{n}}\sum_{x\in\{0,\ldots,m-1\}^n}\int_{[0,1)^n}h(x+z)\,dz.
\end{split}
\end{equation}

Let $x$ be uniform on $\{0,\ldots,m-1\}^n$, and let $U$, $W$ be independent
of $x$ and of each other such that $U$ is uniform on $[0,1)^n$ and $W$ is as in
Lemma \ref{lem:kernel}. Then by the definition of $F$, it is clear that
\[
 F(x+U)=f(x),\qquad
 F(x+U+W)=f(x+\lfloor U+W\rfloor).
\]
We claim here that the variable $\lfloor U+W\rfloor$ is a random variable which is independent of $x$ such that the distribution of $\lfloor U+W\rfloor$ is uniform on $\{-1,0,1\}^n$. Indeed, for each $j\in \{1,\cdots,n\}$, we have
\begin{align*}
\mathbb{P}\{\theta: \lfloor U_{j}+W_{j}\rfloor(\theta)=1\}&=\int_{[0,1)}\int_{\mathbb{R}}\psi(t)\chi_{[1,\infty)}(\omega+t)~dt~d\omega\\
&=\int_{\mathbb{R}}\psi(t)\left(\int_{[0,1)}\chi_{[1,\infty)}(\omega+t)~d\omega\right)dt\\
&=\int_{0}^{\infty}t\psi(t)dt=\frac{1}{3}.
\end{align*}
For other cases, it is also easy to compute that
\[
\mathbb{P}\{\theta:\lfloor U_{j}+W_{j} \rfloor(\theta)=0\}=\mathbb{P}\{\theta:\lfloor U_{j}+W_{j}\rfloor(\theta)=-1\}=\frac{1}{3}.
\]
This completes the proof of our claim. Hence, \eqref{eq:cellintegration} yields that
\begin{equation}\label{eq:shortidentity}
 \int_{\T_m^n}\E_W\norm{F(z+W)-F(z)}{1}^2\,d\nu(z)=\E_{(x,\varepsilon)\in\Z_m^n\times\{-1,0,1\}^{n}}\norm{f(x+\varepsilon)-f(x)}1^2.
\end{equation}

We now apply the smoothing approximation argument as follows. Let $g$ be defined by \eqref{eq:smoothing}. By Proposition \ref{prop:smoothing}, the function $g$ is $C^{1}$ and periodic. For every $z$, the Cauchy--Schwarz inequality gives
\[
 \bigl(\E_W\norm{F(z+W)-F(z)}{1}\bigr)^2\leq\E_W\left(\norm{F(z+W)-F(z)}{1}^2\right).
\]
Hence, applying Proposition \ref{prop:smoothing} and \eqref{eq:shortidentity} we obtain that
\begin{equation}\label{eq:globalapprox}
\begin{split}
\norm{F-g}{L_{2}(\mathbb{T}^{n}_{m};S_{1})}&\leq 3\left(\int_{\mathbb{T}^{n}_{m}}\left(\mathbb{E}_{W}\left\|F(z+W)-F(z)\right\|_{1}\right)^{2}~d\nu(z)\right)^{1/2}\\
&\leq 3\left(\int_{T^{n}_{m}}\E_{W}\left\|F(z+W)-F(z)\right\|^{2}_{1}~d\nu(z)\right)^{1/2}\\
&=3\left(\E_{(x,\varepsilon)\in\Z_m^n\times\{-1,0,1\}^{n}}\norm{f(x+\varepsilon)-f(x)}1^2\right)^{1/2},
\end{split}
\end{equation}
and
\begin{equation}\label{eq:globalgradient}
\left(\sum_{j=1}^n\norm{\partial_jg}{L_2(\mathbb{T}^{n}_{m};S_1)}^2\right)^{1/2}\leq6\sqrt{70}\left(\E_{(x,\varepsilon)\in\Z_m^n\times\{-1,0,1\}^{n}}\norm{f(x+\varepsilon)-f(x)}1^2\right)^{1/2}.
\end{equation}

For an integer vector $h\in \mathbb{N}^{n}$ and the fact $F(x+u+h)=f(x+h)$ for each $u\in [0,1)^{n}$, we have
\begin{equation}\label{representation 1}
\begin{split}
\norm{F(\cdot+h)-F}{L_2(\mathbb{T}^{n}_{m};S_1)}^2&=\int_{T^{n}_{m}}\|F(z+h)-F(z)\|^{2}_{1}~d\nu(z)\\
&=\E_{x\in \mathbb{Z}^{n}_{m}}\int_{[0,1)^{n}}\|F(x+u+h)-F(x+u)\|^{2}_{1}~d u\\
&=\E_{x\in\Z_m^n}\norm{f(x+h)-f(x)}{1}^{2}.
\end{split}
\end{equation}
Since $m$ is even, it follows from \eqref{representation 1} that
\begin{equation}\label{eq:longidentity}
 \left(\sum_{j=1}^{n}\E_{x\in \mathbb{Z}^{n}_{m}}\norm{f(x+\tfrac m2e_j)-f(x)}{1}^2\right)^{1/2}=\left(\sum_{j=1}^n
   \norm{F(\cdot+\tfrac m2e_j)-F}{L_2(\mathbb{T}^{n}_{m}; S_1)}^2\right)^{1/2}.
\end{equation}
For each $j$, by $g\in C^{1}(\mathbb{R}^{n};M^{\sa}_{d})$, the fundamental theorem of calculus gives
\[
g(z+\tfrac m2e_j)-g(z)=\int_0^{m/2}\partial_jg(z+te_j)\,dt.
\]
Hence, the Minkowski inequality yields
\begin{equation}\label{fundamental 1}
\begin{split}
\norm{g(\cdot+\tfrac m2e_j)-g}{L_2(\mathbb{T}^{n}_{m};S_1)}\leq\int_0^{m/2}\norm{\partial_jg(\cdot+te_j)}{L_2(\mathbb{T}^{n}_{m};S_1)}\,dt=\frac m2\norm{\partial_jg}{L_2(\mathbb{T}^{n}_{m}; S_1)}.
\end{split}
\end{equation}
where we used the fact that uniform measure on $\mathbb{T}^{n}_{m}$ is translation invariant. Applying \eqref{eq:longidentity}, we obtain
\begin{equation}\label{triangle 1}
\begin{split}
&\left(\sum_{j=1}^{n}\E_{x\in \mathbb{Z}^{n}_{m}}\norm{f(x+\frac{m}{2}e_{j})-f(x)}{1}^{2}\right)^{1/2}\\
\leq&\left(\sum_{j=1}^{n}\norm{F(\cdot+\frac{m}{2}e_{j})-g(\cdot+\frac{m}{2}e_{j})}{L_{2}(\mathbb{T}^{n}_{m};S_{1})}^{2}\right)^{1/2}\\
&+\left(\sum_{j=1}^{n}\norm{g(\cdot+\frac{m}{2}e_{j})-g}{L_{2}(\mathbb{T}^{n}_{m};S_{1})}^{2}\right)^{1/2}+\sqrt{n}\norm{F-g}{L_{2}(\mathbb{T}^{n}_{m};S_{1})}\\
=&2\sqrt{n}\norm{F-g}{L_{2}(\mathbb{T}^{n}_{m};S_{1})}+\left(\sum_{j=1}^{n}\norm{g(\cdot+\frac{m}{2}e_{j})-g}{L_{2}(\mathbb{T}^{n}_{m};S_{1})}^{2}\right)^{1/2}.
\end{split}
\end{equation}
Due to \eqref{eq:globalapprox}, we now apply Proposition \ref{prop:smoothing} and \eqref{eq:globalgradient} to \eqref{triangle 1} and obtain that
\begin{equation}\label{self-adjoint case}
\begin{split}
&\left(\sum_{j=1}^{n}\E_{x\in \mathbb{Z}^{n}_{m}}\norm{f(x+\frac{m}{2}e_{j})-f(x)}{1}^{2}\right)^{1/2}\\
\leq& 2\sqrt{n}\norm{F-g}{L_{2}(\mathbb{T}^{n}_{m};S_{1})}+3\sqrt{70}m\left(\E_{(x,\varepsilon)\in\Z_m^n\times\{-1,0,1\}^{n}}\norm{f(x+\varepsilon)-f(x)}1^2\right)^{1/2}\\
\leq & (6\sqrt{n}+3\sqrt{70}m)\left(\E_{(x,\varepsilon)\in \mathbb{Z}^{n}_{m}\times \{-1,0,1\}^{n}}\norm{f(x+\varepsilon)-f(x)}{1}^{2}\right)^{1/2},
\end{split}
\end{equation}
for every $f:\mathbb{Z}^{n}_{m}\to M^{\sa}_{d}$. This completes the proof for the self-adjoint matrices setting.

{\bf Step II. Finite dimensional case.} For a matrix $A\in M_{d}$, we define the Hermitian dilation of $A$ by
\[
\widetilde{A}\coloneqq\frac{1}{2}
\begin{pmatrix}
0&A\\ A^*&0
\end{pmatrix}\in M_{2d}^{\sa}
\]
It is easy to see that
\[
\norm{\widetilde{A}}{1}=\frac12\Tr(AA^*)^{1/2}+\frac12\Tr(A^*A)^{1/2}=\norm A1.
\]
By the Hermitian dilation of matrices, we now define the Hermitian dilation of a function $f:\mathbb{Z}^{n}_{m}\to M_{d}$ as follows
\[
 \widetilde f(x)=\frac{1}{2}
\begin{pmatrix}
0&f(x)\\ f(x)^*&0
\end{pmatrix}\in M_{2d}^{\sa}, \qquad x\in \mathbb{Z}^{n}_{m}.
\]

Hence, for a given $f:\mathbb{Z}^{n}_{m}\to M_{d}$, we apply \eqref{self-adjoint case} to $\widetilde{f}$ to obtain
\begin{align*}
&\left(\sum_{j=1}^{n}\E_{x\in \mathbb{Z}^{n}_{m}}\norm{f(x+\frac{m}{2}e_{j})-f(x)}{1}^{2}\right)^{1/2}\\
=&\left(\sum_{j=1}^{n}\E_{x\in \mathbb{Z}^{n}_{m}}\norm{\widetilde{f}(x+\frac{m}{2}e_{j})-\widetilde{f}(x)}{1}^{2}\right)^{1/2}\\
\leq & (6\sqrt{n}+3\sqrt{70}m)\left(\E_{(x,\varepsilon)\in \mathbb{Z}^{n}_{m}\times \{-1,0,1\}^{n}}\norm{\widetilde{f}(x+\varepsilon)-\widetilde{f}(x)}{1}^{2}\right)^{1/2}\\
=&(6\sqrt{n}+3\sqrt{70}m)\left(\E_{(x,\varepsilon)\in \mathbb{Z}^{n}_{m}\times \{-1,0,1\}^{n}}\norm{f(x+\varepsilon)-f(x)}{1}^{2}\right)^{1/2}.
\end{align*}
This proves the assertion for mappings taking values in arbitrary finite dimensional matrices.

{\bf Step III. Infinite dimensional case via approximation.}
For each $x\in\Z_m^n$, by the singular value decomposition of $f(x)$, we have
\[
 f(x)=\sum_{k=1}^\infty s_k(x)u_k(x)\otimes v_k(x)
\]
with
\[
\|f(x)\|_{1}=\sum_{k=1}^\infty s_k(x)<\infty,
\]
and $u\otimes v$ denotes the operator $h\mapsto\langle h,v\rangle u$. For each $N\in \mathbb{N}$, let
\[
 f_N(x)=\sum_{k=1}^N s_k(x)u_k(x)\otimes v_k(x),\qquad
 \eta_N=\max_{x\in\Z_m^n}\sum_{k>N}s_k(x).
\]
By the definition of the Schatten trace class norm, it follows that $\norm{f_N(x)-f(x)}1=\sum_{k>N}s_k(x)\to 0$ as $N\to \infty$. Since $\mathbb{Z}^{n}_{m}$ is a finite set, we have
$\eta_N\to0$. By {\bf Step II}, it follows that for each $N\in \mathbb{N}$
\begin{equation}\label{finite version}
\begin{split}
&\left(\sum_{j=1}^{n}\E_{x\in \mathbb{Z}^{n}_{m}}\norm{f_{N}(x+\frac{m}{2}e_{j})-f_{N}(x)}{1}^{2}\right)^{1/2}\\
\leq&(6\sqrt{n}+3\sqrt{70}m)\left(\E_{(x,\varepsilon)\in \mathbb{Z}^{n}_{m}\times \{-1,0,1\}^{n}}\norm{f_{N}(x+\varepsilon)-f_{N}(x)}{1}^{2}\right)^{1/2}
\end{split}
\end{equation}
Taking $N\to \infty$ on the both sides of \eqref{finite version} yields the desired result.
\end{proof}

We now conclude this paper with the following corollary which settles the metric cotype problem on the Schatten trace class posed by Mendel and Naor in \cite[Section 8]{MN}.

\begin{corollary}\label{cor:scale}
If $m\geq\sqrt n$ is even and $m\geq4$, then for every $f:\Z_m^n\to S_1$ we have
\begin{align*}
&\left(\sum_{j=1}^{n}\E_{x\in \mathbb{Z}^{n}_{m}}\norm{f(x+\frac{m}{2}e_{j})-f(x)}{1}^{2}\right)^{1/2}\\
\leq&32m\left(\E_{(x,\varepsilon)\in \mathbb{Z}^{n}_{m}\times \{-1,0,1\}^{n}}\norm{f(x+\varepsilon)-f(x)}{1}^{2}\right)^{1/2}.
\end{align*}
Conversely, if $C>0$ and even integers $\ell_{n} \geq4$ satisfy
\begin{align*}
&\left(\sum_{j=1}^{n}\E_{x\in \mathbb{Z}^{n}_{\ell_{n}}}\norm{f(x+\frac{\ell_{n}}{2}e_{j})-f(x)}{1}^{2}\right)^{1/2}\\
\leq&C\ell_{n}\left(\E_{(x,\varepsilon)\in \mathbb{Z}^{n}_{\ell_{n}}\times \{-1,0,1\}^{n}}\norm{f(x+\varepsilon)-f(x)}{1}^{2}\right)^{1/2}
\end{align*}
for every $n\geq 1$ and every $f:\mathbb{Z}^{n}_{\ell_{n}}\to S_{1}$, then
\begin{equation}
 \ell_n\geq\frac{\sqrt n}{C\sqrt{1-3^{-n}}}\geq\frac{\sqrt n}{C}.
 \label{eq:necessity}
\end{equation}
\end{corollary}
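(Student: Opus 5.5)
For the first assertion I would simply specialize Theorem~\ref{thm:main}. If $m\geq\sqrt n$, then $6\sqrt n\leq 6m$, so the constant $3\sqrt{70}\,m+6\sqrt n$ is at most $(3\sqrt{70}+6)m$. Since $\sqrt{70}<8.37$, we have $3\sqrt{70}+6<31.2<32$. Taking square roots of \eqref{main sharp} then gives the stated inequality with constant $32m$. No further work is needed here.

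For the converse I would use the Mendel--Naor two-point test with a function taking only two values in $S_1$. Fix a rank-one operator $P$ with $\|P\|_1=1$ and define $f:\Z_{\ell_n}^n\to S_1$ by
\[
f(x)=\mathbf 1\Bigl\{\textstyle\sum_{j=1}^n x_j \text{ is even}\Bigr\}P,
\]
so that $f$ depends only on the parity of $\sum_j x_j$. This is well defined on the torus because $\ell_n$ is even, so the parity of $\sum_j x_j$ does not depend on the representatives chosen.

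Next I would compute both sides. The shift by $\tfrac{\ell_n}{2}e_j$ changes $\sum_j x_j$ by $\ell_n/2$. The expected computation is that this flips the parity when $\ell_n/2$ is odd, giving $\|f(x+\tfrac{\ell_n}2e_j)-f(x)\|_1=1$ for every $x$ and $j$, so the left side equals $\sqrt n$. On the right, $f(x+\varepsilon)\neq f(x)$ exactly when $\sum_j\varepsilon_j$ is odd, that is, when an odd number of the $\varepsilon_j$ are nonzero. Since $\mathbb P(\varepsilon_j\neq 0)=2/3$ independently in $j$, this event has probability
\[
\frac{1-(1-4/3)^n}{2}=\frac{1-(-1/3)^n}{2}.
\]
Substituting into the assumed inequality would give $\ell_n\geq \sqrt n\big/\bigl(C\sqrt{\Pr}\bigr)$, with $\Pr$ this probability.

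I expect the main obstacle to be matching the exact constant $\sqrt{1-3^{-n}}$ in \eqref{eq:necessity} and handling the case $\ell_n\equiv 0 \pmod 4$, where the shift by $\ell_n/2$ preserves parity and the parity test fails. To fix this, I would replace the parity function by $f(x)=\mathbf 1\{x\in A\}P$ for a set $A$ on which the half-shifts are always detected. The natural choice is $A=\{x:\ |\{j: x_j\in[0,\ell_n/2)\}| \text{ is even}\}$, so that each half-shift moves exactly one coordinate between the two halves and flips membership in $A$; the left side is then exactly $\sqrt n$.

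For this choice, $f(x+\varepsilon)\neq f(x)$ requires at least one coordinate to cross the boundary between the halves, and $\varepsilon=0$ (probability $3^{-n}$) never produces a change. Bounding the right-side expectation by the complement of that event, I would get
\[
\E_{x,\varepsilon}\|f(x+\varepsilon)-f(x)\|_1^2\leq 1-3^{-n}.
\]
This yields $\ell_n\geq\sqrt n\big/\bigl(C\sqrt{1-3^{-n}}\bigr)$, and the final inequality in \eqref{eq:necessity} follows from $1-3^{-n}\leq 1$.
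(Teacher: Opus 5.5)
Your proof is correct and follows the paper's route. The first part is Theorem~\ref{thm:main} together with $6\sqrt n\leq 6m$ and $3\sqrt{70}+6<32$; the converse is the Mendel--Naor two-point test applied to $f=\mathbf{1}_A P$, where $A$ is defined by the parity of $|\{j: x_j\in[0,\ell_n/2)\}|$, so every half-shift changes $f$ by trace norm $1$ while $f(x+\varepsilon)=f(x)$ whenever $\varepsilon=0$, which gives exactly the factor $\sqrt{1-3^{-n}}$. Your opening test based on the parity of $\sum_j x_j$ is an unnecessary detour (it fails when $4\mid\ell_n$, as you noticed), and the final choice of $A$ by itself handles every even $\ell_n$.
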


\section*{Acknowledgment}

This work was supported by the National Natural Science Foundation of China (Grant Nos. 12125109, W2411005 \& 12671165); the Natural Science Foundation of Hunan Province (Grant Nos: 2025ZYJ002, 2024JJ1010 \& 2024RC3040); the Scientific Research Fund of Hunan Provincial Education Department (Grant No. 25B0008).

During the preparation of this work, the authors used GPT-6 Astra to assist with language editing and refinement of mathematical arguments. The author takes full responsibility for the content of this paper.

\end{document}